\documentclass[12pt,reqno]{amsart}
\usepackage[colorlinks]{hyperref}
\usepackage{amssymb}
\usepackage{mathrsfs}
\usepackage[all,cmtip]{xy}
\usepackage{cite}
\usepackage{tikz}

\newtheorem{theorem}{Theorem}[section]
\newtheorem{corollary}[theorem]{Corollary}
\newtheorem{lemma}[theorem]{Lemma}
\newtheorem{proposition}[theorem]{Proposition}

\newtheorem{example}[theorem]{Example}
\theoremstyle{definition}

\numberwithin{equation}{section}
\newcommand{\Kspace}{\mathcal K}
\newcommand{\HK}{\mathcal H_K}
\newcommand{\D}{\mathbb D}

\newcommand{\Zp}{\mathbb Z_+}

\newcommand{\Ran}{\operatorname{Ran}}
\newcommand{\C}{\mathbb C}
\newcommand{\T}{\mathbf{T}}
\allowdisplaybreaks

\title[Weighted Frames]{Weighted Frames and the Douglas--Foias problem
}

\author{Hui Dan}
\address{Hui Dan: College of Mathematics, Sichuan University, Chengdu, 610065, China}
\email{hdan@scu.edu.cn}

\author{Kunyu Guo}
\address{Kunyu Guo: School of Mathematical Sciences, Fudan University, Shanghai, 200433, China}
\email{kyguo@fudan.edu.cn}

\author{Dilong Li}
\address{Dilong Li: School of Mathematical Sciences, Fudan University, Shanghai, 200433, China}
\email{dlli23@m.fudan.edu.cn}

\thanks{This work was supported by National Key R\&D Program of China (2024YFA1013400) and the National Natural Science Foundation of China (Grant Nos.~12231005 and 124B2005)}

\keywords{Weighted frames; Douglas--Foias problem; complete Nevanlinna–Pick kernels; reproducing kernel Hilbert spaces; quotient Hilbert modules.}

\makeatletter
\@namedef{subjclassname@2020}{\textup{2020} Mathematics Subject Classification}
\makeatother
\subjclass[2020]{Primary: 46H25, 42C15; Secondary: 46E22, 47A13.}

\begin{document}
%--------------------------------------Abstract----------------------------------------------
\begin{abstract}
Let $\mathbf{T}=(T_1,\ldots,T_n)$ be a commuting tuple of bounded
linear operators on a separable Hilbert space $\mathcal K$, and let
$\{a_\alpha\}_{\alpha\in\mathbb Z_+^n}$ be strictly positive weights.
Under mild assumptions on the weights, we characterize the weighted
frame vectors and weighted Parseval frame vectors, that is, those
vectors $\xi$ for which
$\{a_\alpha \mathbf{T}^\alpha\xi\}_{\alpha\in\mathbb Z_+^n}$
is a frame or a Parseval frame.
We reduce the problem to characterizing surjective module maps from
the reproducing analytic Hilbert module associated with the weights
onto the module $\mathcal K$, equipped with the module structure
defined by $\mathbf{T}$.
It is shown that weighted Parseval frame vectors differ by a unimodular
scalar whenever they exist.
If the kernel defined by the weights has the complete
Nevanlinna--Pick property, then any two associated weighted frame
vectors are similar whenever they exist.
We also focus on the case of the Hardy kernel on the polydisc,
that is, the case $a_\alpha=1$ for all $\alpha\in\mathbb{Z}^n_+$.
This is related to the famous Douglas--Foias problem: whether two
similar quotient modules of the Hardy spaces
$H^2(\mathbb D^n)~(n>1)$ are equal.
Our result shows that the answer is negative.
This leads to the conclusion that frame vectors for representations
of $\mathbb{Z}_+^n$ $(n>1)$ may fail to be similar.
It means that, for the case $a_\alpha=1$, there exists a tuple
$\mathbf{T}=(T_1,\dots,T_n)$ with $n>1$ on $\mathcal K$ and two
distinct frame vectors for $\mathbf{T}$ which are not similar.
\end{abstract}
\maketitle

%----------------------------------Text of article-------------------------------------------
\section{Introduction}

Let $\mathcal{K}$ be a separable Hilbert space, and let $\mathcal{I}$ be a countable index set. A family $\{ \xi_i\}_{i\in\mathcal{I}}\subseteq \mathcal{K}$ is said to be a \emph{frame} for $\mathcal{K}$ if there exist two constants $0<c\leq C<\infty$ such that
\begin{equation}\label{eq:frame-intro}
	c\|\eta\|^2\leq \sum_{i\in I}|\langle\eta,\xi_i\rangle|^2
	\leq C\|\eta\|^2,\qquad \eta\in\Kspace.
\end{equation}
If $c=C=1$, we say that the frame is \emph{Parseval}. The frame inequalities \eqref{eq:frame-intro} express the stability of recovery from the coefficients
$\{\langle\eta,\xi_i\rangle\}_{i\in I}$.
Frames generated by operator orbits arise naturally in dynamical sampling, where an unknown
state is recovered from observations of its evolution \cite{ADK13, ACMT17}.
Let $\mathbf{R}=(R_1,\ldots,R_n)$ be a commuting $n$-tuple of bounded
operators on a separable Hilbert space $\mathcal K$, and let
$\xi\in\mathcal K$ be a fixed observation vector.
At time $\alpha\in\mathbb Z_+^n$, where
$\mathbb Z_+=\{0,1,2,\ldots\}$, an unknown initial state
$\eta\in\mathcal K$ evolves into $\mathbf{R}^\alpha\eta$, where
$\mathbf{R}^\alpha=R_1^{\alpha_1}\cdots R_n^{\alpha_n}$.
Observing this state via $\xi$ gives the measurement
$\langle \mathbf{R}^\alpha\eta,\xi\rangle$.
The aim is to recover $\eta$ from these measurements.
By the identity
$$
\langle \mathbf{R}^\alpha\eta,\xi\rangle
=
\langle\eta,\T^\alpha\xi\rangle,
\qquad \alpha\in\mathbb Z_+^n,
$$
where $\T=(R_1^*,\ldots,R_n^*)$, the recovery problem leads to
the study of the family $\{T^\alpha\xi\}_{\alpha\in\mathbb Z_+^n}$.
The study of frames and Parseval frames of the form
$\{\T^\alpha\xi\}_{\alpha\in\mathbb Z_+^n}$ has recently attracted
considerable interest. We refer the reader to \cite{ACCP23,BC25,BHKLL26}
and the references therein.

In the single-operator setting, Christensen, Hasannasab, and Philipp~\cite{CHP20} characterized frames of the form
$\{T^m\xi\}_{m\geq 0}$ for a fixed operator $T$.
Their approach relies on the function theory of the one-variable Hardy space $H^2(\D)$, particularly Beurling's theorem.
They showed that $\{T^m\xi\}_{m\geq 0}$ is a frame if and only if
the pair $(T,\xi)$ is similar to the pair $(P_{N}M_z|_N, P_{N}1)$ for some model space $N$ of $H^2(\D)$.
Moreover, any two vectors generating such frames are related by an invertible operator commuting with $T$.

The corresponding questions for two commuting operators have also been studied in several settings. Aguilera, Cabrelli, Carbajal, and Paternostro~\cite{ACCP23} considered the bilateral and mixed bilateral--unilateral cases, indexed by $\mathbb Z^2$ and $\mathbb Z\times\Zp$, respectively. They studied frames with multiple generating vectors, using vector-valued
function spaces and shift operators.
For two commuting unilateral iterations, Bailey and Cabrelli~\cite{BC25}
characterized frames of the form
$\{T_1^iT_2^j\xi\}_{i,j\geq0}$ in terms of similarity to coordinate
multipliers compressed to quotient modules of $H^2(\D^2)$. We refer the readers to \cite{Yang01, Yang19} for related works on the Hardy module  $H^2(\D^2)$.

Bailey, Han, Kornelson, Larson, and Liu \cite{BHKLL26} studied
the similarity of frame vectors for semigroup representations.
This viewpoint is closely related to the theory of Hilbert modules over semigroups; see \cite{Guo02}.

In this paper, we consider weighted orbit families
\begin{equation}\label{eq:orbit}
	\{a_\alpha \T^\alpha\xi\}_{\alpha\in\Zp^n},
	\qquad
	\T^\alpha=T_1^{\alpha_1}\cdots T_n^{\alpha_n},
\end{equation}
where $\T=(T_1,\ldots,T_n)$ is a commuting $n$-tuple of bounded operators on $\Kspace$, $\xi\in\Kspace$, and the weights $\{a_\alpha\}_{\alpha\in\mathbb{Z}^n_+}$ are strictly positive.
After normalization, we may assume that $a_0=1$. A vector $\xi\in\Kspace$ is called a \emph{weighted (Parseval) frame vector} if the family in \eqref{eq:orbit} is a (Parseval) frame for $\Kspace$.
The questions are which vectors are weighted frame vectors or weighted Parseval frame vectors, and how these vectors are related.
To incorporate the weights, we introduce the formal rotation-invariant kernel
\begin{equation}\label{eq:kernel}
	K(z,\lambda)=\sum_{\alpha\in\Zp^n}
	a_\alpha^2 z^\alpha\overline{\lambda}^{\alpha}.
\end{equation}
Set $\Omega=\{z\in\C^n: K(z,z)<\infty\}$. Clearly, $0\in\Omega$. We further assume that $\Omega$ is open.
Then $\Omega$ is a complete Reinhardt domain.
Moreover, $K$ defines a kernel function on $\Omega$ (see \cite[Definition 2.22]{AM02}) that is analytic in $z$ and
anti-analytic in $\lambda$.
For each $\lambda\in\Omega$, we denote by $K_\lambda(\cdot)=K(\cdot,\lambda)$ the \emph{reproducing kernel function at $\lambda$}. Each $K_\lambda$ is analytic on $\Omega$.
On the linear span of $\{K_\lambda:\lambda\in\Omega\}$,
there is a canonical inner product $\langle\cdot,\cdot\rangle$ satisfying
$$
\langle K_\lambda,K_w\rangle=K(w,\lambda),
\qquad \lambda,w\in\Omega.
$$
The completion with respect to this inner product, denoted by $\HK$,
is called the \emph{reproducing kernel Hilbert space defined by $K$}; see~\cite{PR16} for more details.
Throughout the paper, we assume that the kernel $K$ satisfies
conditions \textup{(A1)--(A3)} below.
These assumptions ensure that the coordinate multiplication operators
$M_{z_j}$, $1\leq j\leq n$, are bounded on $\HK$, and
$\{a_\alpha z^\alpha\}_{\alpha\in\Zp^n}$ forms an orthonormal basis for $\HK$. Most classical analytic Hilbert spaces satisfy conditions
 \textup{(A1)--(A3)}, including the Hardy and Bergman spaces on the polydisc and the unit ball, and the Drury--Arveson space on the unit ball.

To state our result, we first recall a few basic notions from Hilbert module theory.
%Let $\T=(T_1,\dots,T_n)$ be a commuting tuple of bounded operators on $\Kspace$. Then
The tuple $\T$ induces a natural $\mathbb{C}[z_1,\dots,z_n]$-module structure on $\Kspace$ by
$$
p\cdot\eta=p(\T)\eta,\qquad p\in\mathbb{C}[z_1,\dots,z_n],\ \eta\in\Kspace.
$$
%Besides the module $\Kspace$, two other types of modules, $\HK$ and its quotient modules, will play an important role in this paper.
By our assumptions, the tuple $(M_{z_1},\dots,M_{z_n})$ induces a $\mathbb{C}[z_1,\dots,z_n]$-module structure on $\HK$.
A closed subspace of $\HK$ that is invariant under $M_{z_j}$ for $1\leq j\leq n$ naturally inherits the module structure, and is called a \emph{submodule}.
For a submodule $M$, let $N=M^\perp$. We define the
\emph{compressed coordinate multipliers} $\mathbf{S}=(S_1,\dots,S_n)$ on $N$ by
\begin{equation}
	S_j=P_NM_{z_j}|_N,\qquad 1\leq j\leq n.
\end{equation}
Since $M$ is a submodule,
\begin{equation}\label{eq:quo_mod_def}
	p(\mathbf{S})=S_p,\qquad p\in\mathbb{C}[z_1,\dots,z_n],
\end{equation}
where $S_p=P_NM_p|_N$.
Together with the module structure induced by
$\mathbf S$, the space $N$ is called a
\emph{quotient module} of $\HK$.
By \eqref{eq:quo_mod_def}, the module action of a polynomial $p$ is precisely given by the operator $S_p$.
Let $\T=(T_1,\dots,T_n)$ and $\T'=(T'_1,\dots,T'_n)$ be commuting
tuples of bounded operators on $\Kspace_1$ and $\Kspace_2$, respectively. 
With $\mathcal K_1$ and $\mathcal K_2$ equipped with the associated module structures,
an operator $A\in\mathcal B(\Kspace_1,\Kspace_2)$ is a module map if and only if $AT_j=T'_jA,~1\leq j\leq n$.
A module map is called a \emph{module isomorphism} if it is invertible.
The modules $\Kspace_1$ and $\Kspace_2$ are said to be \emph{similar} if there exists a module isomorphism between them, and \emph{unitarily equivalent} if there exists a unitary module isomorphism between them.

We now describe our results. Let $\Kspace$ be equipped with the module structure induced by $\T$. Then
Theorem~\ref{thm:xi_frame} shows that $\xi$ is a weighted frame vector if and only if there exists a surjective module map
$A:\HK\to\Kspace$ satisfying $A1=\xi$. Such a map is unique.
Moreover, the frame is Parseval if and only if $A$ is a coisometry, that is, $AA^*=I$.
This gives an existence criterion for weighted frame vectors in terms of the $\mathbb{C}[z_1,\dots,z_n]$-module structure on $\Kspace$.
More precisely, weighted frame vectors exist if and only if $\Kspace$ is similar to a quotient module of $\HK$, and a weighted Parseval frame vector exists if and only if $\Kspace$ is unitarily equivalent to a quotient module of $\HK$; see
Corollary~\ref{cor:xi}.

If weighted frame vectors exist, we say that two weighted frame vectors $\xi$ and $\eta$ are \emph{similar} if there exists a module automorphism $A$ of $\Kspace$ such that $A\xi=\eta$.
The classification of weighted frame vectors up to similarity is closely connected to the rigidity of quotient modules of analytic Hilbert spaces.
Using a lifting theorem for isometries, Douglas and Foias~\cite{DF93} established a rigidity theorem for unitarily equivalent quotient modules of the Hardy space $H^2(\mathbb{D}^n)$. Motivated by the theory of defect operators, we extend  their theorem to a broader class of analytic Hilbert spaces; see Proposition~\ref{prop:rig}.
Under the additional assumption that $K$ is a CNP kernel, a rigidity result holds for similar quotient modules; see Corollary~\ref{cor:rig}.
The proof uses the invariant subspace theorem of McCullough and
Trent~\cite{MT00}.
These rigidity results yield a classification of weighted frame vectors and weighted Parseval frame vectors.
We prove that any two weighted Parseval frame vectors differ by a unimodular scalar.
If $K$ is a CNP kernel, then all weighted frame vectors are similar; see Theorem~\ref{thm:cla}.

The rigidity of similar quotient modules fails for the Hardy space  $H^2(\D^n)$ with $n>1$.
Theorem~\ref{thm:Hardy} shows that there exist distinct quotient modules of $H^2(\D^n)~(n>1)$ that are similar.
This gives a negative answer to the Douglas--Foias problem \cite{DF93}.
We remark that this also yields a negative answer to an open
question raised in \cite{BHKLL26}, which asks whether any two
frame vectors for a fixed representation of a commutative
semigroup are necessarily similar; see Theorem~\ref{negaans}.

This paper is organized as follows.
In Section~2, we establish rigidity results for quotient modules of $\HK$ and  prove Theorem~\ref{thm:Hardy}, which gives a negative answer to the Douglas--Foias problem.
In Section~3, we apply these results to the existence and classification of weighted frame vectors and weighted Parseval frame vectors. For the case $a_\alpha=1$ for all
$\alpha\in\mathbb Z_+^n$ with $n>1$, we apply Theorem~\ref{thm:Hardy} to obtain a commuting $n$-tuple whose frame vectors need not be similar.

\section{Rigidity of quotient modules and the Douglas--Foias problem}

Let $n\geq 1$ and let $\{a_\alpha\}_{\alpha\in\mathbb{Z}^n_+}$ be strictly positive weights with $a_0=1$. We formally write
\begin{equation}\label{eq:formal_kernel}
	K(z,\lambda)=\sum_{\alpha\in\Zp^n}
	a_\alpha^2 z^\alpha\overline{\lambda}^{\alpha}.
\end{equation}
Set $\Omega=\{z\in\C^n: K(z,z)<\infty\}$. As mentioned in the introduction, we assume that $\Omega$ is open. Then $\Omega$ is a complete Reinhardt domain in $\C^n$.
Denote by $\HK$ the reproducing kernel Hilbert space defined by the kernel $K$. The space $\HK$ consists of all analytic functions
$f(z)=\sum_{\alpha\in\mathbb{Z}^n_+}
c_\alpha z^\alpha$ on $\Omega$ such that
$$
\|f\|^2
=\sum_{\alpha\in\mathbb{Z}^n_+}\frac{|c_\alpha|^2}{a_\alpha^2}<\infty.
$$
It is straightforward to check that $\{a_\alpha z^\alpha\}_{\alpha\in\Zp^n}$ forms an orthonormal basis for $\mathcal H_K$.
We impose the two conditions on the kernel $K$ and its associated reproducing kernel Hilbert space $\mathcal{H}_K$:
\begin{itemize}
	\item[(A1)] For each $z,\lambda\in\Omega$, $K(z,\lambda)\neq 0$;
	\item[(A2)] Each coordinate multiplication operator $M_{z_j}, 1\leq j\leq n,$ is bounded on $\HK$.
\end{itemize}
Note that $\Omega$ is bounded by (A2). Indeed, fix $\lambda\in\Omega$. By \textup{(A2)}, for every polynomial $p$,
the multiplication operator $M_p$ is bounded on $\HK$. It is straightforward to verify that
$$
M_p^*K_\lambda=\overline{p(\lambda)}K_\lambda,
$$
and hence $|p(\lambda)|\leq \|M_p\|$.
Taking $p(z)=z_j^m$ yields $|\lambda_j|\leq \|M_{z_j}^m\|^{1/m}$.
Letting $m\to\infty$ and using the spectral radius formula, we obtain
$$
|\lambda_j|\leq r_j,\qquad 1\leq j\leq n,
$$
where $r_j$ denotes the spectral radius of $M_{z_j}$.
Therefore, $\Omega\subset \D_{r_1}\times\cdots\times\D_{r_n},$
and hence $\Omega$ is bounded.

Moreover, \textup{(A1)} and \textup{(A2)} imply that $K$ is irreducible in the sense of Agler and McCarthy~\cite{AM02}; that is, for distinct $\lambda,w\in\Omega$, the reproducing kernel functions $K_\lambda$ and $K_w$
are linearly independent.
In addition, by \textup{(A1)}, we can write
\begin{equation}\label{eq:b_a}
	1-\frac{1}{K(z,\lambda)}=
	\sum_{\alpha\in\mathbb Z_+^n}
	b_\alpha z^\alpha\overline{\lambda^\alpha},
\end{equation}
where $b_0=0$ since $a_0=1$.
By the McCullough--Quiggin criterion~\cite{McC92,Qui93}, $K$ is a
complete Nevanlinna--Pick (CNP) kernel if and only if
$$
b_\alpha\geq 0
\qquad\text{for every }\alpha\in\mathbb Z_+^n.
$$

For most classical spaces, including the Hardy and Bergman spaces on the polydisc, as well as the
Hardy and Bergman spaces and the Drury--Arveson space on the unit ball, only finitely many coefficients $b_\alpha$ in
\eqref{eq:b_a} are nonzero.
 In this case, we have
\begin{equation}\label{eq:b_alpha}
	I-1\otimes 1=
	\sum_{\alpha\in\mathbb{Z}_+^n}
	b_\alpha M_{z^\alpha}M_{z^\alpha}^*,
\end{equation}
where, for $f,g\in\mathcal{H}_K$, the rank-one operator
$f\otimes g$ is defined by
\[
(f\otimes g)h=\langle h,g\rangle f,
\qquad h\in\mathcal{H}_K.
\]
Without the finite-support assumption on the coefficients $b_\alpha$,
we have the following characterization. In what follows, for $\alpha=(\alpha_1,\dots,\alpha_n),\beta=(\beta_1,\dots,\beta_n)\in\mathbb{Z}_+^n$, we set
$|\alpha|=\sum_{j=1}^n\alpha_j$ and write $\alpha\leq\beta$
if $\alpha_j\leq\beta_j$ for every $j=1,\dots,n$.
\begin{proposition}\label{prop:WOT}
	For each $N\geq 1$, let $A_N=	\sum_{|\alpha|\leq N}	b_\alpha M_{z^\alpha}M_{z^\alpha}^*.$
	Then the following statements are equivalent:
	\begin{itemize}
		\item[(i)] The sequence $\{A_N\}_{N=1}^\infty$ is uniformly
		bounded in operator norm.
		
		\item[(ii)] The sequence $\{A_N\}_{N=1}^\infty$ converges to
		$I-1\otimes 1$ in the strong operator topology.
		
		\item[(iii)] The sequence $\{A_N\}_{N=1}^\infty$ converges to
		$I-1\otimes 1$ in the weak operator topology.
		
		\item[(iv)] It holds
		\begin{equation}\label{eq:a_and_b}
				\sup_{N\geq 1}
			\sup_{\beta\in\mathbb{Z}_+^n}
			a^{-2}_\beta\left|	\sum_{|\alpha|\leq N, \alpha\leq \beta}
			b_\alpha a^2_{\beta-\alpha}
			\right|<\infty.
		\end{equation}
	\end{itemize}
\end{proposition}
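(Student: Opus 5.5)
The whole argument rests on one observation: every $A_N$ is diagonal in the orthonormal basis $e_\beta=a_\beta z^\beta$. Once the diagonal entries are computed, all four conditions become statements about a sequence of diagonal multipliers.

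\textbf{Step 1: the diagonal entries.} We have $M_{z^\alpha}e_\gamma=(a_\gamma/a_{\gamma+\alpha})e_{\gamma+\alpha}$. Hence $M_{z^\alpha}^*e_\beta=(a_{\beta-\alpha}/a_\beta)e_{\beta-\alpha}$ when $\alpha\le\beta$, and $M_{z^\alpha}^*e_\beta=0$ otherwise. It follows that
\[
M_{z^\alpha}M_{z^\alpha}^*e_\beta=\frac{a_{\beta-\alpha}^2}{a_\beta^2}\,e_\beta \quad(\alpha\le\beta),
\]
so $A_Ne_\beta=d_{N,\beta}e_\beta$ with
\[
d_{N,\beta}=a_\beta^{-2}\sum_{|\alpha|\le N,\ \alpha\le\beta}b_\alpha a_{\beta-\alpha}^2 .
\]
Thus $\|A_N\|=\sup_\beta|d_{N,\beta}|$, and (i)$\Leftrightarrow$(iv) is immediate.

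\textbf{Step 2: the limit of the entries.} Fix $\beta$. Once $N\ge|\beta|$, the entry $d_{N,\beta}$ no longer changes with $N$ and equals
\[
d_\beta=a_\beta^{-2}\sum_{\alpha\le\beta}b_\alpha a^2_{\beta-\alpha}.
\]
To identify $d_\beta$, multiply $1-1/K$ by $K$ to get the formal power series identity
\[
\Bigl(\sum_\alpha b_\alpha t^\alpha\Bigr)\Bigl(\sum_\gamma a_\gamma^2t^\gamma\Bigr)=\sum_\gamma a_\gamma^2t^\gamma-1,
\]
in the variables $t^\alpha=z^\alpha\bar\lambda^\alpha$. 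This is legitimate because both series are power series in $z\bar\lambda$ that converge near $0$. Comparing the coefficients of $t^\beta$ gives $\sum_{\alpha\le\beta}b_\alpha a^2_{\beta-\alpha}=a_\beta^2-\delta_{\beta,0}$. Therefore $d_\beta=1-\delta_{\beta,0}$, and the diagonal operator with entries $d_\beta$ is exactly $I-1\otimes 1$, since $e_0=1$. So $A_Ne_\beta\to(I-1\otimes1)e_\beta$ for every $\beta$; in fact equality holds once $N\ge|\beta|$.

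\textbf{Step 3: the remaining equivalences.}
\begin{itemize}
\item (ii)$\Rightarrow$(iii) is trivial.
\item (iii)$\Rightarrow$(i) follows from the uniform boundedness principle: a WOT-convergent sequence is pointwise weakly bounded, hence bounded in norm.
\item (i)$\Rightarrow$(ii) is the standard $\varepsilon/3$ argument. By Step 2, $A_N\to I-1\otimes 1$ pointwise on the dense subspace of finite linear combinations of the $e_\beta$ (eventually with equality). Uniform boundedness of the $A_N$ extends this strong convergence to all of $\HK$.
\end{itemize}
Alternatively, for diagonal operators one can argue directly: $\|(A_N-D)f\|^2=\sum_\beta|d_{N,\beta}-d_\beta|^2|\hat f_\beta|^2$, and dominated convergence applies under (i).

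\textbf{Main obstacle.} The only nontrivial point is Step 2: justifying the coefficient identity between $b_\alpha$ and $a_\gamma$. This is handled by the fact that $K$ is a convergent power series in $z\bar\lambda$ near $0$ with $K(0,0)=1$, so $1/K$ is analytic there and coefficient comparison is valid.
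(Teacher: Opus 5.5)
Your proposal is correct and follows the paper's proof essentially step for step. You diagonalize $A_N$ in the basis $e_\beta=a_\beta z^\beta$ to get (i)$\Leftrightarrow$(iv), compare coefficients in $(1-1/K)K=K-1$ to see that $A_N$ agrees with $I-1\otimes 1$ on polynomials of degree at most $N$, and close the cycle with density of polynomials plus the uniform boundedness principle; your dominated-convergence remark for (i)$\Rightarrow$(ii) is a harmless alternative to the $\varepsilon/3$ argument.
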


\begin{proof}
	For each $\beta\in\mathbb{Z}_+^n$, let $e_\beta=a_\beta\,z^\beta$.
	Then $\{e_\beta:\beta\in\mathbb{Z}_+^n\}$ is an orthonormal basis
	for $\mathcal{H}_K$. Since $M_{z^\alpha}M_{z^\alpha}^*e_\beta$ equals
	$a_\beta^{-2}a_{\beta-\alpha}^2e_\beta$ when $\alpha\leq\beta$
	and is zero otherwise,
	we have $A_Ne_\beta=a_\beta^{-2}\bigl(\sum_{|\alpha|\leq N,\,\alpha\leq\beta}
	b_\alpha a_{\beta-\alpha}^2\bigr)e_\beta$, and hence
	\begin{equation*}
		\|A_N\|
		=
		\sup_{\beta\in\mathbb{Z}_+^n}
		a^{-2}_\beta\left|	\sum_{|\alpha|\leq N, \alpha\leq \beta}b_\alpha a^2_{\beta-\alpha}\right|.
	\end{equation*}
	This proves the equivalence of {\rm (i)} and {\rm (iv)}.
	
	Comparing coefficients in the identity
	\begin{equation}
		\left(1-\frac{1}{K(z,\lambda)}\right)K(z,\lambda)=K(z,\lambda)-1,
	\end{equation}
	we obtain
	\[
	\sum_{\alpha\leq\beta}
	b_\alpha a_{\beta-\alpha}^2
	=
	\begin{cases}
		a_\beta^2,
		&
		\beta\neq0,\\
		0,
		&
		\beta=0.
	\end{cases}
	\]
	Thus, for $N\geq|\beta|$, $A_Ne_\beta=(I-1\otimes1)e_\beta$. Then
	\begin{equation}\label{eq:AN}
		A_N p=(I-1\otimes 1)p,\qquad p\in\mathbb{C}[z_1,\dots,z_n]~\text{with}~ \deg p\leq N.
	\end{equation}
	If {\rm (i)} holds, then \eqref{eq:AN} and the
	density of the polynomials in $\mathcal H_K$ imply that $	A_N\rightarrow I-1\otimes1$
	in the strong operator topology. Therefore, {\rm (i)} implies {\rm (ii)}.
	
	The implication {\rm (ii)}$\Rightarrow${\rm (iii)} is immediate, while
	(iii)$\Rightarrow$(i) follows from the uniform boundedness
	principle.
\end{proof}

We further impose the following:
\begin{itemize}
	\item[(A3)] The weights $\{a_\alpha\}_{\alpha\in\mathbb Z_+^n}$
	satisfy one of the conditions in Proposition~\ref{prop:WOT}.
\end{itemize}
It is worth mentioning that an analogous condition in the strong
operator topology was imposed by Bhattacharyya and Jindal~\cite{BJ23} in their study of $1/k$-contractivity.

With these conditions in place, we now turn to the rigidity of quotient modules of $\HK$.
The following proposition extends the rigidity theorem of Douglas and
Foias~\cite{DF93} for quotient modules of $H^2(\D^n)$
to the present class of reproducing kernel Hilbert spaces. The proof of Douglas and Foias heavily relies on isometric lifting, while our argument uses
only the module structure of the reproducing kernel Hilbert space.

\begin{proposition}\label{prop:rig}
	Let $M_1$ and
	$M_2$ be two submodules of $\mathcal H_K$. Then the quotient modules $M_1^\perp$ and $M_2^\perp$ are unitarily equivalent if and only if $M_1=M_2$.
	Moreover, every unitary module automorphism of a quotient module is a unimodular scalar multiple of the identity map.
\end{proposition}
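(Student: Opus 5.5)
Our plan is to use the defect identity from (A3) in the form $I-1\otimes 1=\sum_\alpha b_\alpha M_{z^\alpha}M_{z^\alpha}^*$ (WOT limit of $A_N$), compressed to a quotient module $N=M^\perp$. Since $M$ is invariant under each $M_{z^\alpha}$, $N$ is invariant under $M_{z^\alpha}^*$, and so $P_N M_{z^\alpha}M_{z^\alpha}^*|_N=S_{z^\alpha}S_{z^\alpha}^*$, where $S_{z^\alpha}^*=M_{z^\alpha}^*|_N$. Compressing $A_N$ to $N$ and passing to the WOT limit gives, on $N$,
\[
I_N-P_N1\otimes P_N1=\sum_{\alpha}b_\alpha S_{z^\alpha}S_{z^\alpha}^*\quad\text{(WOT)},
\]
with partial sums $P_NA_N|_N$ uniformly bounded. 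The point is that the rank-one defect $D_N:=P_N1\otimes P_N1$ is determined by the module structure $\mathbf S$ alone.

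Now let $U:M_1^\perp\to M_2^\perp$ be a unitary module map, so $US_j^{(1)}=S_j^{(2)}U$; taking adjoints and using unitarity, $U S_j^{(1)*}=S_j^{(2)*}U$ as well. Hence $U S^{(1)}_{z^\alpha}S^{(1)*}_{z^\alpha}U^*=S^{(2)}_{z^\alpha}S^{(2)*}_{z^\alpha}$ for each $\alpha$. Conjugating the WOT identity (WOT limits are preserved under $X\mapsto UXU^*$) gives $U D_1U^*=D_2$, i.e. $UP_{N_1}1\otimes UP_{N_1}1=P_{N_2}1\otimes P_{N_2}1$. So $UP_{N_1}1=cP_{N_2}1$ with $|c|=1$, where both vectors are nonzero or both zero.

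We then handle the degenerate case. If $P_N1=0$ then $1\in M$, so $M=\HK$ by the density of polynomials, and $N=0$; thus both quotients vanish together. Otherwise, $P_N1$ is cyclic for $\mathbf S$, because $p(\mathbf S)P_N1=P_Np$ and the polynomials are dense. Using $U p(\mathbf S^{(1)})=p(\mathbf S^{(2)})U$ we get $UP_{N_1}p=cP_{N_2}p$ for every polynomial $p$. Taking norms, $\|P_{N_1}f\|=\|P_{N_2}f\|$ for all $f\in\HK$ by density. Thus $P_{N_1}=P_{N_2}$ as positive operators (polarization), so $M_1=M_2$. The converse direction is trivial.

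For the automorphism claim, take $M_1=M_2=M$. The same computation gives $UP_Nf=cP_Nf$ for all $f$, so $U=cI_N$. The main obstacle we expect is justifying the compressed WOT identity. We will argue directly that $\langle A_Nx,y\rangle\to\langle(I-1\otimes1)x,y\rangle$ for $x,y\in N$, which is immediate from (A3), together with the compression identity $P_NM_{z^\alpha}M_{z^\alpha}^*|_N=S_{z^\alpha}S_{z^\alpha}^*$; this needs only the $M^*$-invariance of $N$.
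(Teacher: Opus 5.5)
Your proof is correct. It rests on the same key ingredient as the paper's: the defect identity from (A3), together with the $M_{z^\alpha}^*$-invariance of quotient modules, makes the rank-one defect $1\otimes 1$ a unitary invariant. Where you diverge is the finish.

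The paper stays in $\mathcal H_K$. It conjugates the identity by $M_p$ for every polynomial $p$, obtaining $M_pM_p^*-p\otimes p=\sum_\alpha b_\alpha M_{pz^\alpha}M_{pz^\alpha}^*$, and deduces $|\langle h,U^*f\rangle|=|\langle h,f\rangle|$ for all $h\in\mathcal H_K$. This only yields a phase $\theta_f$ that depends on the vector $f$. The paper then uses $U^*f=\theta_f f$ to get $M_2^\perp\subseteq M_1^\perp$, reruns the argument with $U^*$ for the reverse inclusion, and needs a separate linearity step to show $\theta_f$ is constant.

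You use the identity only once (for $p=1$), compressed to the quotient, and extract the exact vector relation $UP_{N_1}1=cP_{N_2}1$ with a single global phase. Cyclicity of $P_N1$ and the intertwining then give $UP_{N_1}=cP_{N_2}$ on all of $\mathcal H_K$. Both $M_1=M_2$ and $U=cI$ follow at once, with no symmetric rerun and no per-vector phases.

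Your route is slightly more economical and more structural: it isolates $P_N1$ as the defect vector determined by the module structure, which is exactly the vector used later in the proof of Theorem~\ref{thm:cla}(i). The cost is the degenerate case $P_N1=0$ and the cyclicity observation, neither of which the paper's quadratic-form argument needs.
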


\begin{proof}
	It suffices to prove necessity. Let $U:M_1^\perp\rightarrow M_2^\perp$ be a unitary module isomorphism.
	Then for each polynomial $p$, $US_p=S_pU$,
	and hence $S^*_pU^*=U^* S^*_p$.
	Since $M_i^\perp,~i=1,2,$ are invariant under $M_p^*$, we have
	\[
	M_p^*U^*f=U^*M_p^*f,
	\qquad f\in M_2^\perp.
	\]
	In particular,
	\begin{equation}\label{eq:M_h}
		\langle M_pM_p^*U^*f,U^*f\rangle
		=
		\langle M_pM_p^*f,f\rangle,
		\qquad
		f\in M_2^\perp,\quad
		p\in\mathbb C[z_1,\dots,z_n].
	\end{equation}
	By {\rm (A3)},
	\[
	I-1\otimes1
	=
	\sum_{\alpha\in\mathbb Z_+^n}
	b_\alpha M_{z^\alpha}M_{z^\alpha}^*,
	\]
	where the series converges in the strong operator topology.
	Multiplying this identity on the left by $M_p$ and on the right by
	$M_p^*$, we obtain
	\[
	M_pM_p^*-p\otimes p
	=
	\sum_{\alpha\in\mathbb Z_+^n}
	b_\alpha M_{pz^\alpha}M_{pz^\alpha}^*.
	\]
	Applying \eqref{eq:M_h} to the polynomials $p$ and $pz^\alpha$
	yields
	\[
	\langle(p\otimes p)U^*f,U^*f\rangle
	=
	\langle(p\otimes p)f,f\rangle.
	\]
	Therefore,
	\[
	|\langle p,U^*f\rangle|
	=
	|\langle p,f\rangle|,
	\qquad
	p\in\mathbb C[z_1,\dots,z_n].
	\]
	Since the polynomials are dense in $\mathcal H_K$, it follows that
	\[
	|\langle h,U^*f\rangle|
	=
	|\langle h,f\rangle|,
	\qquad h\in\mathcal H_K.
	\]
	Consequently, for every nonzero $f\in M_2^\perp$, there exists a
	unimodular constant $\theta_f$ such that $U^*f=\theta_f f$.
	In particular, $M_2^\perp\subseteq M_1^\perp$. Applying the same argument to $U^*$, we obtain the reverse inclusion. Then $M_1^\perp=M_2^\perp$,
	and hence $M_1=M_2$.
	The converse is immediate.
	
	 Finally, when $M_1^\perp=M_2^\perp$,
	the linearity of $U^*$ shows that $\theta_f$ is independent of
	$f$. Thus $U^*=\theta I$
	for some $\theta\in\mathbb T$, and hence $U$ is a unimodular scalar
	multiple of the identity map.
\end{proof}

We now turn to similarity between quotient modules. Recall that
the multiplier algebra of $\mathcal H_K$ is defined by
$$
\operatorname{Mult}(\mathcal H_K)
:=
\left\{
h\in\operatorname{Hol}(\Omega):
hf\in\mathcal H_K
\text{ for every }f\in\mathcal H_K
\right\}.
$$
It is not difficult to check that polynomials are dense in ${\rm Mult}(\mathcal{H}_K)$ in the strong operator topology.

\begin{proposition}\label{prop:Mult_mod}
	Let $M_1$ and $M_2$ be submodules of $\mathcal{H}_K$, and suppose that $M_1^\perp$ is similar to $M_2^\perp$.
	Then
	\[
	\operatorname{Mult}(\mathcal{H}_K)\cap M_1
	=
	\operatorname{Mult}(\mathcal{H}_K)\cap M_2.
	\]
\end{proposition}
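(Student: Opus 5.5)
Let $X:M_1^\perp\to M_2^\perp$ be an invertible module map, so $XS^{(1)}_p=S^{(2)}_pX$ for every polynomial $p$, where $S^{(i)}_p=P_{M_i^\perp}M_p|_{M_i^\perp}$. By symmetry (using $X^{-1}$) it suffices to prove $\operatorname{Mult}(\HK)\cap M_1\subseteq M_2$. Take $\varphi\in\operatorname{Mult}(\HK)\cap M_1$.

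The strategy is to pass to adjoints, where the intertwining becomes a statement about the restrictions of $M_p^*$ to the co-invariant spaces. Taking adjoints gives $X^*(M_p^*|_{M_2^\perp})=(M_p^*|_{M_1^\perp})X^*$. Since polynomials are SOT-dense in $\operatorname{Mult}(\HK)$, the adjoints $M_p^*$ converge WOT to $M_\varphi^*$ along a suitable net. Both $M_i^\perp$ are invariant under $M_\varphi^*$ for multipliers $\varphi$, because the $M_i$ are invariant under $M_\varphi$: this follows since $\varphi f$ is a limit of $pf$ for $f\in M_i$ and the $M_i$ are closed. Passing to the limit, I expect
\[
X^*M_\varphi^*g=M_\varphi^*X^*g,\qquad g\in M_2^\perp .
\]
This limiting step needs a little care: on a WOT-convergent net the intertwining relation passes to the limit because $X^*$ is bounded and the relation is linear in $M_p^*$.

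Now use $\varphi\in M_1$. For $h\in M_1^\perp$ we have $\langle M_\varphi^* h,1\rangle=\langle h,\varphi\rangle=0$, so $M_\varphi^*$ maps $M_1^\perp$ into $M_1^\perp\cap\{1\}^\perp$. The aim is to show $\langle g,\varphi\rangle=0$ for all $g\in M_2^\perp$, that is, $\varphi\perp M_2^\perp$, which is exactly $\varphi\in M_2$.

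The hard part is transferring this orthogonality through $X^*$, and I would do it in three steps.
\begin{enumerate}
\item Describe $M_\varphi^*h$ coefficientwise: the pairing of $M_\varphi^*h$ with monomials gives $\langle M_\varphi^*h,z^\beta\rangle=\langle h,\varphi z^\beta\rangle$.
\item Deduce that $M_\varphi^*$ annihilates $M_1^\perp$. Since $\varphi z^\beta\in M_1$ for every $\beta$ (as $M_1$ is a submodule), $M_\varphi^*h\perp z^\beta$ for all $\beta$. The polynomials are dense, so $M_\varphi^*h=0$. Hence $M_\varphi^*|_{M_1^\perp}=0$.
\item Transfer through $X^*$. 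The intertwining gives $X^*M_\varphi^*g=M_\varphi^*X^*g=0$ for $g\in M_2^\perp$, since $X^*g\in M_1^\perp$. Because $X^*$ is injective, $M_\varphi^*g=0$. Pairing with $1$ then gives $\langle g,\varphi\rangle=\langle M_\varphi^* g,1\rangle=0$.
\end{enumerate}
This shows $\varphi\perp M_2^\perp$, so $\varphi\in M_2$, and the symmetric argument with $X^{-1}$ gives the reverse inclusion.

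With step 2 in hand the argument is short. The remaining technical point is justifying the WOT approximation of $M_\varphi$ by polynomial multipliers together with the invariance of $M_i$ under $M_\varphi$. If that approximation is awkward, I would instead verify $X^*M_\varphi^*=M_\varphi^*X^*$ only on kernel functions, using $M_\varphi^*K_\lambda=\overline{\varphi(\lambda)}K_\lambda$ and the uniform boundedness of the approximating net.
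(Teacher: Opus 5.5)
Your proof is correct and is essentially the paper's argument in adjoint form. The paper sets $B=AP_{M_1^\perp}$, extends $BM_p=S_pB$ to multipliers by SOT density, and computes $Bf=BM_f1=P_{M_2^\perp}(fB1)=0$ for $f\in\operatorname{Mult}(\HK)\cap M_2$ using the same fact $f\HK\subseteq M_2$ as your step 2, then concludes by injectivity as your step 3 does with $X^*$. (Your kernel-function fallback is unnecessary, and would not work as stated since $K_\lambda$ generally does not lie in $M_2^\perp$, the domain of $X^*$.)
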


\begin{proof}
	Let $A:M_1^\perp\rightarrow M_2^\perp$
	be a module isomorphism. We set $B=AP_{M_1^\perp}$.
	Then
	\begin{equation}\label{eq:BMp}
		BM_p=S_p B,\qquad p\in\mathbb{C}[z_1,\dots,z_n],
	\end{equation}
 and hence
	\begin{equation}\label{eq:BMh}
		BM_h=S_h B,\qquad h\in {\rm Mult}(\mathcal{H}_K).
	\end{equation}
	Suppose now that $f\in\operatorname{Mult}(\mathcal{H}_K)\cap M_2$. Using a routine argument, one has $fB1\in M_2$. Then by \eqref{eq:BMh},
	$$
	Bf=BM_f 1=P_{M_2^\perp}\left(fB1 \right)=0.
	$$
	Since $A$ is injective, \(f\in M_1\). Thus $\operatorname{Mult}(\mathcal{H}_K)\cap M_2	\subseteq M_1$.
	Applying the same argument to the module isomorphism $A^{-1}$, we have $\operatorname{Mult}(\mathcal{H}_K)\cap M_1
	\subseteq M_2$.
	Then the desired equality follows.
\end{proof}

Suppose that $K$ is a CNP kernel. The invariant-subspace theorem of McCullough and
Trent \cite[Theorem 0.7]{MT00} implies that $\overline{\operatorname{Mult}(\mathcal H_K)\cap M}=M$
for every submodule $M$ of $\mathcal H_K$.
Together with Proposition~\ref{prop:Mult_mod}, this gives the following corollary.
\begin{corollary}\label{cor:rig}
	Suppose that $K$ is a CNP kernel. If $M_1$ and $M_2$ are submodules
	of $\mathcal H_K$, then the quotient modules $M_1^\perp$ and
	$M_2^\perp$ are similar if and only if $M_1=M_2$.
\end{corollary}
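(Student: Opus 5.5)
The plan is to combine Proposition~\ref{prop:Mult_mod} with the McCullough--Trent theorem as quoted just before the statement. Sufficiency is trivial: if $M_1=M_2$, the identity map on $M_1^\perp$ is a module isomorphism. The whole argument is therefore the necessity direction.

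For necessity, I would argue in three steps.

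\begin{enumerate}
\item Suppose $A:M_1^\perp\to M_2^\perp$ is a module isomorphism. Proposition~\ref{prop:Mult_mod} then gives
\[
\operatorname{Mult}(\mathcal H_K)\cap M_1=\operatorname{Mult}(\mathcal H_K)\cap M_2 .
\]
\item Since $K$ is a CNP kernel, \cite[Theorem 0.7]{MT00} gives
\[
\overline{\operatorname{Mult}(\mathcal H_K)\cap M_i}=M_i,\qquad i=1,2.
\]
\item Taking closures of the equal sets in step 1 and using step 2 yields $M_1=M_2$.
\end{enumerate}

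The hypotheses of the McCullough--Trent theorem are met. The kernel $K$ is irreducible by (A1)--(A2), as noted earlier in the paper. It is normalized at $0$ because $a_0=1$ gives $K(z,0)=1$. Finally, each $M_i$ is a closed subspace invariant under every $M_{z_j}$.

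The main thing to check is that invariance under the coordinate multipliers implies invariance under all of $\operatorname{Mult}(\mathcal H_K)$, since that is the form McCullough--Trent use. I would get this from the strong-operator density of polynomials in the multiplier algebra, which the paper asserts, together with closedness of $M_i$. Given these, the corollary is immediate.
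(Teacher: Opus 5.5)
Your proposal is correct and follows essentially the same route as the paper, which also obtains the corollary by combining Proposition~\ref{prop:Mult_mod} with the McCullough--Trent density $\overline{\operatorname{Mult}(\mathcal H_K)\cap M}=M$ and then taking closures. The paper leaves implicit the hypothesis checks you spell out: irreducibility, the normalization $K(\cdot,0)=1$, and the passage from coordinate invariance to $\operatorname{Mult}(\mathcal H_K)$-invariance via strong-operator density of polynomials.
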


Classical examples of CNP kernels include the Hardy kernel and Dirichlet kernel on $\D$
and the Drury--Arveson kernel on the unit ball $\mathbb B_n$.
For a general kernel $K$ and the associated reproducing kernel Hilbert space $\mathcal{H}_K$, a natural question is whether two similar quotient modules of $\mathcal H_K$ must be equal.
For the Hardy space $H^2(\D^n)$ with $n>1$, this is the famous Douglas--Foias problem \cite{DF93, CG03}.
To attack this problem, we introduce some notions.
Let $\T=(T_1,\dots,T_n)$ be a commuting $n$-tuple of operators on
$\Kspace$, and equip $\Kspace$ with the associated module structure. The \emph{module point spectrum} $\sigma_p(\Kspace)$ consists of all points $\lambda=(\lambda_1,\dots,\lambda_n)\in\mathbb C^n$ such that
there exists a nonzero vector $\xi\in\Kspace$ satisfying
$$
T_j\xi=\lambda_j\xi,\qquad 1\leq j\leq n.
$$
Such a vector is called an \emph{eigenvector} corresponding to
$\lambda$.
 The module point spectrum $\sigma_p(\Kspace)$ is contained in the joint spectrum of the tuple $\T$. We refer the reader to~\cite{Tay70,Arv72, CG03} and the references therein for more background.

 Suppose that $M$ is a submodule of $\HK$ and $\lambda\in\sigma_p(M^\perp)\cap\Omega$. We can construct a quotient module
 similar to $M^\perp$ using an eigenvector corresponding to $\lambda$. More precisely,
 let $\varphi\in M^\perp$ be a nonzero eigenvector corresponding to $\lambda$, and define the rank-one operator
 \begin{equation}\label{eq:def_A}
 	A:\HK\to M^\perp,\qquad Af=f(\lambda)\varphi .
 \end{equation}
By choosing $\varphi$ with $\|\varphi\|<\|K_\lambda\|^{-1}$, we may  assume that $\|A\|<1$.
 For every $f\in\HK$ and $1\leq j\leq n$,
 \begin{equation*}
 	AM_{z_j}f
 	=\lambda_j f(\lambda)\varphi
 	=f(\lambda)S_j\varphi
 	=S_jAf.
 \end{equation*}
 Thus $A$ is a module map. We define
 \begin{equation}\label{eq:def_B}
 	B=P_{M^\perp}+A.
 \end{equation}
 Since both $P_{M^\perp}$ and $A$ are module maps, so is $B$.
 Furthermore,
 \begin{equation*}
 	B|_{M^\perp}
 	=I_{M^\perp}+A|_{M^\perp}
 \end{equation*}
 is invertible since $\|A\|<1$.
 It follows that $B$ is surjective.
Let $N=\ker B$. Then $N$ is a submodule of $\HK$, and 
a direct computation gives
\begin{equation}\label{eq:def_N}
	N=
	\left\{
	f-\frac{f(\lambda)}{1+\varphi(\lambda)}\varphi: 	f\in M
	\right\}.
\end{equation}
Moreover, $N^\perp$ is similar to $M^\perp$ via the module isomorphism $B|_{N^\perp}:N^\perp\to M^\perp$.

To continue this aim, for a subspace $E$ of $\HK$, the \emph{common zero set of $E$} is defined by
\begin{equation*}
	\mathcal{Z}(E) :=\{\lambda\in\Omega: f(\lambda)=0 \text{ for all }f\in E\}.
\end{equation*}
\begin{proposition}\label{prop:CZS_MandMult}
	If $M$ is a submodule of $\HK$
 such that $M\cap\operatorname{Mult}(\HK)\neq \{0\}$,
 then
	\[
	\sigma_p(M^\perp)\cap\Omega\subseteq \mathcal Z(M\cap\operatorname{Mult}(\HK)).
	\]
\end{proposition}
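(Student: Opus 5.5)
Fix $\lambda\in\sigma_p(M^\perp)\cap\Omega$ and a nonzero eigenvector $\varphi\in M^\perp$, so that $S_j\varphi=\lambda_j\varphi$ for $1\leq j\leq n$. Fix also $h\in M\cap\operatorname{Mult}(\HK)$. The goal is $h(\lambda)=0$. The plan is to compute $P_{M^\perp}(h\varphi)$ in two ways: once using that $M$ is a submodule containing $h$, and once using that $\varphi$ is a joint eigenvector of the compressed tuple $\mathbf S$.

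\textbf{Step 1.} Extend the eigenvector relation from coordinate functions to multipliers. From \eqref{eq:quo_mod_def}, $p(\mathbf S)=S_p$ for every polynomial $p$, so
\[
S_p\varphi=p(\mathbf S)\varphi=p(\lambda)\varphi .
\]
Now pass to $h$. By the density of polynomials in $\operatorname{Mult}(\HK)$ in the strong operator topology (stated before Proposition~\ref{prop:Mult_mod}), choose polynomials $p_k$ with $M_{p_k}\to M_h$ strongly. Then $S_{p_k}\varphi\to S_h\varphi$. Moreover $M_{p_k}1=p_k\to h$ in $\HK$, and point evaluation at $\lambda$ is continuous, so $p_k(\lambda)\to h(\lambda)$. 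Passing to the limit gives
\[
S_h\varphi=h(\lambda)\varphi .
\]
Identity \eqref{eq:BMh} in the proof of Proposition~\ref{prop:Mult_mod} used the same extension from polynomials to multipliers. The one point to check is that the approximating net can be chosen so that the point evaluations also converge; strong convergence applied to the vector $1$ already gives this.

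\textbf{Step 2.} Show $h\varphi\in M$. Take polynomials $p_k$ as in Step 1 and $g_k\to\varphi$ polynomials. I would avoid this double limit and argue directly instead. Since $M$ is closed and invariant under all $M_p$, it is invariant under every operator in the strong closure of $\{M_p\}$, and in particular under $M_h$. Because $h\in M$ and $\varphi\in\HK$, it remains to conclude $h\varphi=M_\varphi h$ or $M_h\varphi\in M$. This is the main subtlety, since $\varphi$ need not be a multiplier and $M_h$ applied to $\varphi$ does not obviously land in $M$.

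The fix is to work with adjoints. For any $g\in M^\perp$,
\[
\langle h\varphi,g\rangle=\langle M_h\varphi,g\rangle=\langle S_h\varphi,g\rangle=h(\lambda)\langle\varphi,g\rangle .
\]
So this step only reaches the conclusion $P_{M^\perp}(h\varphi)=h(\lambda)\varphi$, which is just Step 1 restated. The missing ingredient is therefore the ``routine argument'' used in the proof of Proposition~\ref{prop:Mult_mod}: for $f\in\operatorname{Mult}(\HK)\cap M$ and any $u\in\HK$, one has $fu\in M$. It follows by approximating $u$ by polynomials $q_k$: each $fq_k=M_{q_k}f$ lies in $M$, and $fq_k=M_fq_k\to M_fu=fu$ because $M_f$ is bounded. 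Applying this with $f=h$ and $u=\varphi$ gives $h\varphi\in M$, hence $P_{M^\perp}(h\varphi)=0$.

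\textbf{Step 3.} Combine the two computations. Since $M_h\varphi=h\varphi\in M$ by Step 2, we get $S_h\varphi=P_{M^\perp}(h\varphi)=0$. Step 1 gives $S_h\varphi=h(\lambda)\varphi$, so $h(\lambda)\varphi=0$. As $\varphi\neq0$, this forces $h(\lambda)=0$. Since $h\in M\cap\operatorname{Mult}(\HK)$ was arbitrary, $\lambda\in\mathcal Z(M\cap\operatorname{Mult}(\HK))$.

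The hypothesis $M\cap\operatorname{Mult}(\HK)\neq\{0\}$ is not used in the argument. It only guarantees that the zero set on the right-hand side is a proper subset of $\Omega$, so that the inclusion carries information. I expect the only delicate point to be the justification in Step 1 that the strong approximation of $M_h$ by polynomial multipliers is compatible with evaluation at $\lambda$.
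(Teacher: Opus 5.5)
Your argument is correct, and it is more direct than the paper's. The paper gets the inclusion from its similarity machinery. It normalizes the eigenvector so that $\|\varphi\|<\|K_\lambda\|^{-1}$ and forms the surjective module map $B=P_{M^\perp}+A$ with $Af=f(\lambda)\varphi$. It then passes to the submodule $N=\ker B$, whose quotient $N^\perp$ is similar to $M^\perp$, and invokes Proposition~\ref{prop:Mult_mod} to get $h\in N$. Hence $0=Bh=P_{M^\perp}h+h(\lambda)\varphi=h(\lambda)\varphi$.

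If you unwind the proof of Proposition~\ref{prop:Mult_mod} for this particular $B$, its two ingredients are exactly yours:
\begin{itemize}
\item the intertwining $AM_h=S_hA$ evaluated at $1$ is your Step~1 identity $S_h\varphi=h(\lambda)\varphi$, obtained by the same strong approximation of $M_h$ by polynomial multipliers;
\item the ``routine argument'' $hu\in M$ for $u\in\HK$ is your Step~2.
\end{itemize}
So you isolate the actual mechanism and dispense with the norm normalization, the invertibility of $B|_{M^\perp}$, and the auxiliary submodule $N$. The paper's packaging instead presents the proposition as a consequence of the similarity invariant $\operatorname{Mult}(\HK)\cap M$, and it reuses that construction in Theorem~\ref{thm:DF}.

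In Step~1 a net of polynomials suffices; nothing in your argument needs a sequence. In the write-up, delete the false starts in Step~2: the double-limit remark and the adjoint computation that only restates Step~1. The clean version is as follows. For every polynomial $p$ you have $(p-p(\lambda))\varphi\in M$, so in the limit $(h-h(\lambda))\varphi\in M$. You also have $h\varphi\in M$, so $h(\lambda)\varphi\in M\cap M^\perp=\{0\}$.
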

\begin{proof}
	We may assume that $\sigma_p(M^\perp)\cap\Omega\neq\emptyset$.
	Fix $\lambda\in\sigma_p(M^\perp)\cap\Omega$, and let
	$\varphi\in M^\perp$ be an eigenvector such that the corresponding module map $B$ defined by \eqref{eq:def_B} is surjective. Let $N=\ker B$.
	By the preceding argument, $N^\perp$ and $M^\perp$ are similar. For
	$g\in M\cap\operatorname{Mult}(\HK)$, by Proposition~\ref{prop:Mult_mod}
	we have $g\in N$. It follows from \eqref{eq:def_A} and \eqref{eq:def_B}	that $g(\lambda)=0$. Thus $\lambda\in\mathcal Z(M\cap\operatorname{Mult}(\HK))$.
\end{proof}

\begin{theorem}\label{thm:DF}
	Let $M$ be a submodule of $\HK$, and let
	$\lambda\in\sigma_p(M^\perp)\cap\Omega$ satisfy
	$\lambda\notin\mathcal Z(M)$.
	Suppose that $\varphi\in M^\perp$ is an eigenvector corresponding
	to $\lambda$ such that $\|\varphi\|<\|K_\lambda\|^{-1}$.
	Then
	$$
	N=\left\{f-\frac{f(\lambda)}{1+\varphi(\lambda)}\varphi:	f\in M	\right\}
	$$
	is a submodule of $\HK$ such that $N\neq M$, and the quotient modules
	$N^\perp$ and $M^\perp$ are similar.
\end{theorem}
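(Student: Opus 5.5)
Most of the work was done in the discussion preceding the theorem. My plan is to formalize that construction and then add the one new ingredient, $N\neq M$.

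First, as in \eqref{eq:def_A}, set $Af=f(\lambda)\varphi$. The estimate $|f(\lambda)|=|\langle f,K_\lambda\rangle|\le\|f\|\|K_\lambda\|$ gives $\|A\|\le\|K_\lambda\|\|\varphi\|<1$. Since $\varphi$ is a joint eigenvector for $\mathbf S$, $A$ is a module map. So $B=P_{M^\perp}+A$ is a module map $\HK\to M^\perp$. Because $\|A\|<1$, the restriction $B|_{M^\perp}$ is invertible, so $B$ is surjective and $N:=\ker B$ is a submodule.

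Next I identify $\ker B$ with the displayed set. If $Bh=0$, write $h=f+g$ with $f\in M$ and $g\in M^\perp$. Then $g+h(\lambda)\varphi=0$, so $g=-h(\lambda)\varphi$. Evaluating at $\lambda$ gives $h(\lambda)(1+\varphi(\lambda))=f(\lambda)$. Here $1+\varphi(\lambda)\neq0$, since $|\varphi(\lambda)|\le\|\varphi\|\|K_\lambda\|<1$. Hence $h=f-\frac{f(\lambda)}{1+\varphi(\lambda)}\varphi$. Conversely, each such element is killed by $B$: its $M^\perp$-part is $-\frac{f(\lambda)}{1+\varphi(\lambda)}\varphi$, its value at $\lambda$ is $\frac{f(\lambda)}{1+\varphi(\lambda)}$, and the two terms of $B$ cancel.

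For similarity, $B|_{N^\perp}:N^\perp\to M^\perp$ is injective because $\ker B=N$. It is onto because $B(\HK)=B(N^\perp)$. It is a bounded bijection, hence invertible by the open mapping theorem. It intertwines the compressions: for $h\in N^\perp$, the module map property of $B$ gives $BP_{N^\perp}M_{z_j}h=BM_{z_j}h=S_jBh$, using $B(N)=0$.

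The remaining point, which is where the hypothesis $\lambda\notin\mathcal Z(M)$ enters, is $N\neq M$. I expect this to be the only real obstacle. Pick $f\in M$ with $f(\lambda)\ne0$. Then $h=f-\frac{f(\lambda)}{1+\varphi(\lambda)}\varphi$ lies in $N$. Its component in $M^\perp$ is $-\frac{f(\lambda)}{1+\varphi(\lambda)}\varphi\neq0$, because $\varphi\ne0$. So $h\notin M$, and therefore $N\neq M$.
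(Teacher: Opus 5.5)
Your proof is correct and follows the paper's argument: the same $A$, $B=P_{M^\perp}+A$, $N=\ker B$, and $B|_{N^\perp}$ as the similarity, and you also write out the kernel computation and the fact $1+\varphi(\lambda)\neq0$ that the paper leaves as a direct computation. The only difference is in the step $N\neq M$: the paper takes $h\in M$ with $h(\lambda)\neq0$ and notes $Bh=h(\lambda)\varphi\neq0$, so $h\in M\setminus N$, whereas you exhibit an element of $N\setminus M$; either suffices.
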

\begin{proof}
	Let $A$ and $B$ be as constructed above. Then $N=\ker B$ is a submodule such that the quotient modules $N^\perp$ and $M^\perp$ are similar.
	 It remains to show that $N\neq M$.
	Since $\lambda\notin\mathcal Z(M)$, there exists a function
	$h\in M$ such that $h(\lambda)\neq0$. Then $Bh=h(\lambda)\varphi\neq0$,
	and hence $h\notin N$. It follows that $N\neq M$.
\end{proof}

We first construct an example for the Bergman space $L^2_a(\D)$ on the unit disc. Let $L$ be a submodule of $L^2_a(\D)$, and put
$S_z=P_{L^\perp}M_z|_{L^\perp}$. Then by \cite[Lemma~3.1]{ARS96},
for every $\lambda\in\D$,
the subspace $(z-\lambda)L$ is closed and
\begin{equation}\label{dimeq}
\dim\bigl(L\ominus(z-\lambda)L\bigr)
=\dim(L\ominus zL).
\end{equation}
Hedenmalm \cite{Hed93} proved that there exist a lot of submodules
$L$ of $L^2_a(\D)$ such that $\dim(L\ominus zL)\geq2$.
The following lemma shows that, for such a submodule $L$,
the module point spectrum of $L^\perp$ is equal to $\D$.
\begin{lemma}\label{lem:bergman-point}
If $\dim(L\ominus zL)\geq2$, then $\sigma_p(L^\perp)=\D$.
\end{lemma}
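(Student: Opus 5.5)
We must show that every $\lambda\in\D$ is a joint eigenvalue of $S_z=P_{L^\perp}M_z|_{L^\perp}$. The plan is to move to adjoints. Since $L$ is invariant under $M_z$, the space $L^\perp$ is invariant under $M_z^*$, and $S_z^*=M_z^*|_{L^\perp}$. The eigenvalue condition $S_z\xi=\lambda\xi$ is not directly about $M_z^*$, so instead we look for a nonzero $\xi\in L^\perp$ with $(S_z-\lambda)\xi=0$. Equivalently,
\[
P_{L^\perp}(z-\lambda)\xi=0,\qquad\text{that is,}\qquad (z-\lambda)\xi\in L .
\]
So $\lambda\in\sigma_p(L^\perp)$ if and only if there is a nonzero $\xi\perp L$ with $(z-\lambda)\xi\in L$.

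To produce such a $\xi$, fix $\lambda\in\D$ and use \eqref{dimeq}: the subspace $(z-\lambda)L$ is closed and $\dim(L\ominus(z-\lambda)L)\geq 2$. Choose two linearly independent $g_1,g_2\in L\ominus(z-\lambda)L$. The combination
\[
g=g_2(\lambda)g_1-g_1(\lambda)g_2
\]
vanishes at $\lambda$. Replacing $g$ by $g_1$ if $g_1(\lambda)=g_2(\lambda)=0$, we get a nonzero $g\in L$ with $g(\lambda)=0$ and $g\perp(z-\lambda)L$. Division by $z-\lambda$ is bounded on $L^2_a(\D)$ for functions vanishing at $\lambda$, since $M_z-\lambda$ is bounded below with range $\{f: f(\lambda)=0\}$. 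Hence $\xi:=g/(z-\lambda)\in L^2_a(\D)$, $\xi\neq0$, and $(z-\lambda)\xi=g\in L$.

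The difficulty is that $\xi$ need not be orthogonal to $L$. To fix this, replace $\xi$ by $\eta=P_{L^\perp}\xi$. Then $\xi-\eta\in L$, so $(z-\lambda)(\xi-\eta)\in(z-\lambda)L\subseteq L$, and therefore
\[
(z-\lambda)\eta=g-(z-\lambda)(\xi-\eta)\in L .
\]
By the first paragraph, $\eta$ is an eigenvector, provided $\eta\neq0$.

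The main obstacle is showing $\eta\neq0$, that is, $\xi\notin L$. Suppose $\xi\in L$. Then $g=(z-\lambda)\xi\in(z-\lambda)L$. But $g\perp(z-\lambda)L$, so $g\perp g$ and $g=0$, a contradiction. Hence $\eta$ is a nonzero vector in $L^\perp$ with $S_z\eta=\lambda\eta$, and so $\lambda\in\sigma_p(L^\perp)$.

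This shows $\D\subseteq\sigma_p(L^\perp)$. For the reverse inclusion, $\sigma_p(L^\perp)$ lies in the spectrum of $S_z$, which is contained in $\overline{\D}$. A point $\lambda$ with $|\lambda|=1$ cannot be an eigenvalue: $(z-\lambda)\eta\in L^2_a$ forces nothing by itself, but $S_z$ is a contraction, and an eigenvector with $|\lambda|=1$ would give $\|P_{L^\perp}z\eta\|=\|\eta\|\geq\|z\eta\|$. This forces $z\eta\in L^\perp$ and $\|z\eta\|=\|\eta\|$. That is impossible in the Bergman space for $\eta\neq0$, because $\|z\eta\|<\|\eta\|$ there. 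Therefore $\sigma_p(L^\perp)=\D$.
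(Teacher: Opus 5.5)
Your proof is correct and follows the paper's argument: take a nonzero $h\in L\ominus(z-\lambda)L$ with $h(\lambda)=0$ (your linear-combination step makes explicit what the paper only asserts), divide by $z-\lambda$, project onto $L^\perp$, and use $h\perp(z-\lambda)L$ to see the projection is nonzero. For the reverse inclusion, the paper skips the detour through $\sigma(S_z)\subseteq\overline{\D}$ and writes $|\lambda|^2\|f\|^2=\|S_zf\|^2\le\|zf\|^2<\|f\|^2$ for every eigenvector at once; this is also the cleaner form of your boundary-case chain, which as written omits the step $\|P_{L^\perp}z\eta\|\le\|z\eta\|$.
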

\begin{proof}
We fix $\lambda\in\D$.
By \eqref{dimeq}, there exists a nonzero function $h\in L\ominus(z-\lambda)L$ with
$h(\lambda)=0$. Set $g(z)=h(z)/(z-\lambda)$.
Then $g\in L^2_a(\D)$. 
If $g\in L$, then $h=(z-\lambda)g\in(z-\lambda)L$, 
which contradicts $h\perp(z-\lambda)L$. Therefore, we have
$u=P_{L^\perp}g\neq0$. Since $L$ is a submodule,
$$
(z-\lambda)u=h-(z-\lambda)P_Lg\in L,
$$
and hence $S_zu=\lambda u$. This proves
$\D\subseteq\sigma_p(L^\perp)$.

For the reverse inclusion, let $\lambda\in\sigma_p(L^\perp)$
and choose a nonzero $f\in L^\perp$ such that $S_zf=\lambda f$.
Then
$$
|\lambda|^2\|f\|^2=\|S_zf\|^2
\leq\|zf\|^2=\frac{1}{\pi}\int_{\D}|z|^2|f(z)|^2\,dA(z)<\|f\|^2.
$$
Thus $|\lambda|<1$, and hence $\sigma_p(L^\perp)\subset\D$.
Combining this with the opposite inclusion yields
$\sigma_p(L^\perp)=\D$.
\end{proof}

\begin{example}\label{ex:Bergman}
Using Hedenmalm's theorem \cite{Hed93}, we can take a submodule $L$ of $L^2_a(\D)$ such that $\dim(L\ominus zL)\geq 2$.
Choose $\lambda\in\D\setminus \mathcal{Z}(L)$. 
Then Lemma~\ref{lem:bergman-point} implies that $\lambda\in \sigma_p(L^\perp)$. Therefore, by Theorem~\ref{thm:DF}, there exists a submodule $N\neq L$ such that $N^\perp$ and $L^\perp$ are similar quotient modules of $L^2_a(\D)$.
\end{example}

We now turn to the Hardy space $H^2(\D^n)$ on the polydisc $\D^n$ with $n>1$. It is straightforward to verify that, for every submodule
$M$ of $H^2(\D^n)$, $\sigma_p(M^\perp)\subseteq \D^n$.
To transfer the construction from the Bergman space to this setting, we first consider the case $n=2$. Recall that the diagonal restriction operator defined by
Rudin \cite{Rudin},
$$
\Phi:H^2(\D^2)\to L^2_a(\D), \qquad (\Phi f)(z)=f(z,z),
$$
is a coisometry; i.e., $\Phi\Phi^*=I$.

\begin{proposition}\label{prop:diagonal-spectrum}
	Let $L$ be a submodule of $L^2_a(\D)$ with  $\dim(L\ominus zL)\geq2$,  and put $M=\Phi^{-1}(L)$.
	Then $M$ is a submodule of $H^2(\D^2)$, and
	\begin{align}
		\sigma_p(M^\perp)
		&=\{(\lambda,\lambda):\lambda\in\D\},
		\label{eq:diagonal-point-spectrum}\\
		\mathcal Z(M)
		&=\{(\lambda,\lambda):\lambda\in\mathcal Z(L)\}.
		\label{eq:diagonal-zero-set}
	\end{align}
\end{proposition}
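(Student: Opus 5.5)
The plan is to transfer everything from $L^\perp$ to $M^\perp$ through the coisometry $\Phi$. First I would record the intertwining relations
\[
\Phi M_{z_1}=\Phi M_{z_2}=M_z\Phi .
\]
These hold because $(z_jf)(z,z)=z f(z,z)$ for $j=1,2$. They show that $M=\Phi^{-1}(L)$ is closed, since $\Phi$ is bounded and $L$ is closed. They also show that $M$ is invariant under $M_{z_1}$ and $M_{z_2}$, since $L$ is $M_z$-invariant. Hence $M$ is a submodule, and it contains $\ker\Phi$. Since $\Phi\Phi^*=I$, the map $\Phi$ is surjective with $\Phi(M)=L$, and $\Phi^*$ is an isometry.

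The key step, and the one needing the most care, is to identify the quotient:
\[
M^\perp=\Phi^*(L^\perp).
\]
Since $\ker\Phi\subseteq M$, we have $M^\perp\subseteq(\ker\Phi)^\perp=\operatorname{Ran}\Phi^*$. So every $f\in M^\perp$ has the form $f=\Phi^*g$. For $h\in M$ we get $\langle \Phi^*g,h\rangle=\langle g,\Phi h\rangle$. Because $\Phi(M)=L$, this gives $\Phi^*g\perp M$ if and only if $g\perp L$. Hence $U=\Phi^*|_{L^\perp}:L^\perp\to M^\perp$ is a unitary.

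Taking adjoints in the intertwining relations gives $M_{z_j}^*\Phi^*=\Phi^*M_z^*$. Quotients are co-invariant, so $S_j^*=M_{z_j}^*|_{M^\perp}$ and $S_z^*=M_z^*|_{L^\perp}$. It follows that $S_j^*U=US_z^*$, and therefore $S_jU=US_z$ for $j=1,2$. Thus $M^\perp$ with $(S_1,S_2)$ is unitarily equivalent to $L^\perp$ with the pair $(S_z,S_z)$.

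The point spectrum now follows from this equivalence. A joint eigenvector of $(S_1,S_2)$ with eigenvalue $(\mu,\nu)$ pulls back to a nonzero $u\in L^\perp$ with $S_zu=\mu u$ and $S_zu=\nu u$. This forces $\mu=\nu\in\sigma_p(L^\perp)$. Conversely, each eigenvector $u$ for $\lambda$ gives the joint eigenvector $Uu$ for $(\lambda,\lambda)$. Lemma~\ref{lem:bergman-point} gives $\sigma_p(L^\perp)=\D$, which yields \eqref{eq:diagonal-point-spectrum}.

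For the zero set, first take a point $(a,b)\in\D^2$ with $a\neq b$. The function $z_1-z_2$ lies in $\ker\Phi\subseteq M$ and does not vanish at $(a,b)$, so no off-diagonal point lies in $\mathcal Z(M)$. At a diagonal point, every $f\in M$ satisfies $f(\lambda,\lambda)=(\Phi f)(\lambda)$. Since $\Phi(M)=L$, every $f\in M$ vanishes at $(\lambda,\lambda)$ exactly when every $g\in L$ vanishes at $\lambda$. This proves \eqref{eq:diagonal-zero-set}.
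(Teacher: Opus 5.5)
Your proof is correct and follows essentially the same route as the paper. Both arguments identify $M^\perp=\Phi^*(L^\perp)$ using $\ker\Phi\subseteq M$ and $\Phi(M)=L$, show that $U=\Phi^*|_{L^\perp}$ is a unitary with $S_jU=US_z$ for $j=1,2$, and then use Lemma~\ref{lem:bergman-point} and the function $z_1-z_2\in\ker\Phi$ to finish. The differences are cosmetic: you check the intertwining through the adjoint relations $M_{z_j}^*\Phi^*=\Phi^*M_z^*$ rather than through $P_{M^\perp}=\Phi^*P_{L^\perp}\Phi$, and you spell out why joint eigenvalues must lie on the diagonal, which the paper leaves implicit.
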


\begin{proof}
	Since  $\Phi M_{z_j}=M_z\Phi$ for $j=1,2$,
	$M$ is a submodule of $H^2(\D^2)$.
	As $\Phi\Phi^*=I$, we have the
	orthogonal decompositions
	$$
	H^2(\D^2)=\ker\Phi\oplus\Phi^*L^2_a(\D),\qquad
	M=\ker\Phi\oplus\Phi^*L.
	$$
	It follows that
	$$
	M^\perp=\Phi^*(L^\perp),\qquad
	P_{M^\perp}=\Phi^*P_{L^\perp}\Phi.
	$$
	Thus $U=\Phi^*|_{L^\perp}:L^\perp\to M^\perp$ is unitary. For $u\in L^\perp$ and $j=1,2$,
	\[
	S_jUu
	=\Phi^*P_{L^\perp}\Phi M_{z_j}\Phi^*u
	=\Phi^*P_{L^\perp}M_zu
	=US_zu,
	\]
	and hence \eqref{eq:diagonal-point-spectrum} follows from Lemma~\ref{lem:bergman-point}.
	
	Now we show \eqref{eq:diagonal-zero-set}. Since $z_1-z_2\in\ker\Phi\subseteq M$,  every common zero of $M$ lies on the diagonal. Furthermore,
	$\Phi(M)=L$ implies that for $\lambda\in\D$,
	$
	F(\lambda,\lambda)=0$ for every $F\in M$ if and only if $h(\lambda)=0$ for every $h\in L$.
	This proves \eqref{eq:diagonal-zero-set}.
\end{proof}

\begin{theorem}\label{thm:Hardy}
	For every $n>1$, there exists a submodule $M$ of $H^2(\D^n)$ such that $\sigma_p(M^\perp)\setminus\mathcal Z(M)\neq\emptyset$.
	Moreover, for such a submodule $M$, a submodule $N$ constructed as in
	Theorem~\ref{thm:DF} satisfies $N\neq M$ and $N^\perp$ is similar
	to $M^\perp$.
\end{theorem}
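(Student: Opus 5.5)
For $n=2$ the plan is to combine Hedenmalm's theorem with Proposition~\ref{prop:diagonal-spectrum}, and then to lift to general $n$ by adjoining dummy variables. Take a submodule $L$ of $L^2_a(\D)$ with $\dim(L\ominus zL)\geq 2$, which exists by \cite{Hed93}. Since $L\neq\{0\}$, the zero set $\mathcal Z(L)$ is a discrete subset of $\D$, so we may choose $\lambda\in\D\setminus\mathcal Z(L)$. Put $M_0=\Phi^{-1}(L)\subseteq H^2(\D^2)$. By \eqref{eq:diagonal-point-spectrum} we have $(\lambda,\lambda)\in\sigma_p(M_0^\perp)$, and by \eqref{eq:diagonal-zero-set} we have $(\lambda,\lambda)\notin\mathcal Z(M_0)$. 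This settles the first assertion for $n=2$.

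For $n>2$, let $M$ be the closed submodule of $H^2(\D^n)$ generated by $M_0$ together with $z_3,\dots,z_n$; equivalently, $M=\overline{M_0\otimes H^2(\D^{n-2})+\sum_{j\geq 3}z_jH^2(\D^n)}$. Then
\[
M^\perp=M_0^\perp\otimes \C1,
\]
identified with functions of $(z_1,z_2)$ alone. On this space $S_{z_1}$ and $S_{z_2}$ act as the compressions from the $2$-variable case, while $S_{z_j}=0$ for $j\geq3$.

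Now take $\varphi\in M_0^\perp$ to be an eigenvector at $(\lambda,\lambda)$. Then $\varphi\otimes 1$ is an eigenvector of $M^\perp$ at $\mu=(\lambda,\lambda,0,\dots,0)\in\D^n$. Next, pick $F\in M_0$ with $F(\lambda,\lambda)\neq0$. Then $F\otimes1\in M$ and it does not vanish at $\mu$, so $\mu\notin\mathcal Z(M)$, and hence $\sigma_p(M^\perp)\setminus\mathcal Z(M)\neq\emptyset$.

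The checks needed here are routine. One is the description of $M^\perp$: a function orthogonal to $z_jH^2(\D^n)$ for all $j\geq3$ depends only on $(z_1,z_2)$, and within that slice orthogonality to $M$ reduces to orthogonality to $M_0$. The other is the computation of the compressed action on $\varphi\otimes1$.

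For the second assertion, we verify that $H^2(\D^n)$ satisfies (A1)--(A3). The kernel is $K(z,\lambda)=\prod_j(1-z_j\bar\lambda_j)^{-1}$, which never vanishes, so (A1) holds. Each $M_{z_j}$ is an isometry, so (A2) holds. Finally, $1/K$ is a polynomial, so only finitely many $b_\alpha$ are nonzero and \eqref{eq:b_alpha} holds, which gives (A3).

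Given any $M$ with a point $\mu\in\sigma_p(M^\perp)\setminus\mathcal Z(M)$, automatically $\mu\in\D^n=\Omega$, since $\sigma_p(M^\perp)\subseteq\D^n$. Rescale the eigenvector so that $\|\varphi\|<\|K_\mu\|^{-1}$. Theorem~\ref{thm:DF} then yields $N\neq M$ with $N^\perp$ similar to $M^\perp$.

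The main obstacle is the lifting step from $n=2$ to $n>2$, specifically the claim that $M^\perp$ is exactly $M_0^\perp\otimes1$ with the stated compressed action. I would prove it by decomposing $H^2(\D^n)=H^2(\D^2)\otimes H^2(\D^{n-2})$ and using that the closed span of $z_3,\dots,z_n$ times everything is $H^2(\D^2)\otimes(H^2(\D^{n-2})\ominus\C)$.
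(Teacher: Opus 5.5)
Your proposal is correct and follows the paper's route. For $n=2$ you use Hedenmalm's submodule $L$ with Proposition~\ref{prop:diagonal-spectrum}; for $n>2$ your submodule generated by $M_0$ and $z_3,\dots,z_n$ equals $\ker R\oplus(M_0\otimes 1)$, which is exactly the paper's $R^{-1}(M_0)$ with $(RF)(z_1,z_2)=F(z_1,z_2,0,\dots,0)$, and you then apply Theorem~\ref{thm:DF} as the paper does, additionally spelling out the checks of (A1)--(A3) and of $\sigma_p(M^\perp)\subseteq\D^n$ that the paper takes for granted.
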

\begin{proof}
	Let $L$ be the Bergman submodule in Example~\ref{ex:Bergman}, and
	choose $\lambda\in\D\setminus\mathcal Z(L)$.
	For $n=2$, set $M_2=\Phi^{-1}(L)$.
	Proposition~\ref{prop:diagonal-spectrum} gives $(\lambda,\lambda)\in
	\sigma_p(M_2^\perp)\setminus\mathcal Z(M_2)$.
	This proves the first assertion for $n=2$.
	
	For $n>2$, define $R:H^2(\D^n)\to H^2(\D^2)$ by putting
	\[
	(RF)(z_1,z_2)=F(z_1,z_2,0,\ldots,0),
	\]
	and set $M_n=R^{-1}(M_2)$. The operator $R$ is a coisometry, with
	$R^*f=f\otimes1$ under the identification
	\[
	H^2(\D^n)=H^2(\D^2)\otimes H^2(\D^{n-2}).
	\]
	Clearly, $M_n$ is closed and invariant under all coordinate
	multipliers, and hence is a submodule of $H^2(\D^n)$.
	Moreover, $M_n^\perp=M_2^\perp\otimes\C1$.
	It is not difficult to check that for every $f\in M_2^\perp$,
	$S_j(f\otimes1)=(S_jf)\otimes1$ for $j=1,2$, while
	$S_j(f\otimes1)=0$ for $j>2$.
	Consequently, if $\phi\in M_2^\perp$ is an eigenvector corresponding to $(\lambda,\lambda)$, then $\phi\otimes1$
	is an  eigenvector corresponding to $\lambda'=(\lambda,\lambda,0,\ldots,0)\in\D^n$.
	Since $(\lambda,\lambda)\notin\mathcal Z(M_2)$, there exists
	$F\in M_2$ with $F(\lambda,\lambda)\neq0$. The function
	$R^*F=F\otimes1$ belongs to $M_n$ and $R^*F(\lambda')=F(\lambda,\lambda)\neq0$.
	Thus $\lambda'\notin\mathcal Z(M_n)$.
	
	In either case, applying Theorem~\ref{thm:DF} to $M=M_n$ ,
	%and the eigenvalue constructed above, 
	one can choose a submodule $N\neq M$ with $N^\perp$ similar to $M^\perp$.
\end{proof}

We remark that Theorem~\ref{thm:Hardy} shows that the Douglas--Foias
problem has a negative answer.
For more background on analytic Hilbert spaces and quotient modules, we refer the reader to \cite{CG03, AM02, GRS02, GW09, Guo99} and the references therein.

\section{Similarity of weighted frame vectors}
In Section~2, we developed rigidity results for quotient modules of
analytic Hilbert spaces. In this section, we apply these results to
the study of weighted frame vectors.
Recall that $\T=(T_1,\dots,T_n)$ is a commuting $n$-tuple of operators
on a separable Hilbert space $\mathcal K$. As in Section~2, equip
$\Kspace$ with the module structure induced by $\T$. Let
$\{a_\alpha\}_{\alpha\in\mathbb Z_+^n}$ be strictly positive weights.
For a weighted frame vector $\xi$, the associated \emph{analysis operator}
$\Theta_\xi:\mathcal K\to\mathcal H_K$ is defined by
\begin{equation}
	\Theta_\xi\eta
	=
	\sum_{\alpha\in\mathbb Z_+^n}
	\langle\eta,a_\alpha \T^\alpha\xi\rangle
	a_\alpha z^\alpha, \qquad \eta\in\Kspace.
\end{equation}
The adjoint operator $\Theta_\xi^*:\HK\to\Kspace$ is called the \emph{synthesis operator}.
Since $\{a_\alpha z^\alpha\}_{\alpha\in\mathbb{Z}^n_+}$ forms an
orthonormal basis for $\HK$, we have
$$
\|\Theta_\xi\eta\|^2
=\sum_{\alpha\in\mathbb Z_+^n}
\left|\langle\eta,a_\alpha \T^\alpha\xi\rangle\right|^2,
\qquad
\eta\in\mathcal K .
$$
By the definition of weighted frame vectors, there
exist constants $c_\xi,~C_\xi>0$ such that
\begin{equation}\label{eq:Theta_bdd}
	c_\xi\|\eta\|^2
	\leq
	\|\Theta_\xi\eta\|^2
	\leq
C_\xi\|\eta\|^2,
	\qquad
	\eta\in\mathcal K .
\end{equation}
Clearly, the synthesis operator $\Theta_\xi^*$
is surjective, and $\xi$ is a weighted Parseval frame vector if and only if
$\Theta_\xi$ is an isometry.

\begin{theorem}\label{thm:quo_mod_map}
Let $\xi$ be a weighted frame vector for the tuple $\T$. Then
	\begin{equation}\label{eq:Theta_poly}
		M_p^*\Theta_\xi
		=
		\Theta_\xi p(\T)^*,
		\qquad
		p\in\mathbb C[z_1,\dots,z_n].
	\end{equation}
	Consequently, $\operatorname{Ran}\Theta_\xi$ is a quotient module of $\mathcal H_K$, and the synthesis operator $\Theta_\xi^*:\mathcal H_K\rightarrow\mathcal K$
	is a surjective module map satisfying $\Theta_\xi^*1=\xi$.
	Moreover, the restriction $\Theta_\xi^*\big|_{\operatorname{Ran}\Theta_\xi}
	:
	\operatorname{Ran}\Theta_\xi\rightarrow\mathcal K$
	is a module isomorphism.
	In particular, if $\xi$ is a weighted Parseval frame vector, then
	$\Theta_\xi^*|_{\operatorname{Ran}\Theta_\xi}$ is unitary.
\end{theorem}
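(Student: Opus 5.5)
The plan is to verify the intertwining identity \eqref{eq:Theta_poly} on basis vectors first, and then read off every structural claim from it. By linearity and multiplicativity it suffices to treat $p=z_j$. Since $\Theta_\xi$ is bounded by \eqref{eq:Theta_bdd}, both sides of \eqref{eq:Theta_poly} are bounded. So I would compare their coefficients against the orthonormal basis $e_\beta=a_\beta z^\beta$. On the left, $M_{z_j}e_\beta=(a_\beta/a_{\beta+\epsilon_j})e_{\beta+\epsilon_j}$, where $\epsilon_j$ denotes the $j$-th unit multi-index. This gives
\[
\langle M_{z_j}^*\Theta_\xi\eta,e_\beta\rangle=\frac{a_\beta}{a_{\beta+\epsilon_j}}\langle\Theta_\xi\eta,e_{\beta+\epsilon_j}\rangle=\frac{a_\beta}{a_{\beta+\epsilon_j}}\langle\eta,a_{\beta+\epsilon_j}\T^{\beta+\epsilon_j}\xi\rangle=\langle\eta,a_\beta\T^{\beta+\epsilon_j}\xi\rangle .
\]
On the right, $\langle\Theta_\xi T_j^*\eta,e_\beta\rangle=\langle T_j^*\eta,a_\beta\T^\beta\xi\rangle=\langle\eta,a_\beta\T^{\beta+\epsilon_j}\xi\rangle$, using commutativity of $\T$. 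The two agree, which proves \eqref{eq:Theta_poly}. Taking adjoints gives $\Theta_\xi^*M_p=p(\T)\Theta_\xi^*$, so $\Theta_\xi^*$ is a module map. Moreover $\Theta_\xi^*1=\Theta_\xi^*e_0=\xi$, because $\langle\Theta_\xi^*e_0,\eta\rangle=\langle e_0,\Theta_\xi\eta\rangle=\langle\xi,\eta\rangle$ (recall $a_0=1$). Surjectivity of $\Theta_\xi^*$ follows from the lower frame bound, since $\Theta_\xi$ is bounded below.

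Next I would show that $\Ran\Theta_\xi$ is a quotient module. The lower bound in \eqref{eq:Theta_bdd} makes $\Ran\Theta_\xi$ closed. By \eqref{eq:Theta_poly} it is invariant under every $M_p^*$, so its orthogonal complement $(\Ran\Theta_\xi)^\perp=\ker\Theta_\xi^*$ is a submodule. Hence $\Ran\Theta_\xi$ is a quotient module with the compressed action $S_p$. To see that the restriction $\Theta_\xi^*|_{\Ran\Theta_\xi}$ is a module isomorphism, I would use the decomposition $\HK=\ker\Theta_\xi^*\oplus\Ran\Theta_\xi$. The restriction is injective because $\Ran\Theta_\xi\cap\ker\Theta_\xi^*=\{0\}$. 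It is surjective because $\Theta_\xi^*$ is surjective and annihilates the other summand. It is bounded, hence invertible by the open mapping theorem. It intertwines the module actions: for $f\in\Ran\Theta_\xi$ we have $\Theta_\xi^*S_pf=\Theta_\xi^*P_{\Ran\Theta_\xi}M_pf=\Theta_\xi^*M_pf=p(\T)\Theta_\xi^*f$, since $P_{\ker\Theta_\xi^*}M_pf$ is killed by $\Theta_\xi^*$.

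Finally, in the Parseval case $\Theta_\xi$ is an isometry. Then $\Theta_\xi^*$ restricted to $\Ran\Theta_\xi$ is the inverse of the unitary $\Theta_\xi:\Kspace\to\Ran\Theta_\xi$, and so it is unitary. The only delicate point in the whole argument is the index bookkeeping in the weight computation of the first paragraph. The cancellation $a_\beta a_{\beta+\epsilon_j}/a_{\beta+\epsilon_j}=a_\beta$ is exactly what makes the weighted orbit compatible with the backward shift. Everything else is formal.
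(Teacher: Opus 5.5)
Your proposal is correct and follows essentially the same route as the paper. You prove $M_{z_j}^*\Theta_\xi=\Theta_\xi T_j^*$ by a coefficient computation in the orthonormal basis $\{a_\alpha z^\alpha\}$, extend it to polynomials, and then read off the quotient-module structure of $\operatorname{Ran}\Theta_\xi$, the module-map property, $\Theta_\xi^*1=\xi$, the isomorphism and the Parseval case. The only differences are cosmetic: you pair against $e_\beta$ where the paper applies $M_{z_j}^*$ termwise to the expansion of $\Theta_\xi\eta$, and you spell out the bijectivity and $S_p$-intertwining of $\Theta_\xi^*|_{\operatorname{Ran}\Theta_\xi}$, which the paper states in one line.
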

\begin{proof}
	We first show that
	\begin{equation}\label{eq:Theta_zj}
		M_{z_j}^*\Theta_\xi
		=
		\Theta_\xi T_j^*,
		\qquad 1\leq j\leq n.
	\end{equation}
	Fix $1\leq j\leq n$ and $\eta\in\mathcal K$, and set
	$e_j=(0,\dots,0,1,0,\dots,0)$, with $1$ in the $j$-th position.
	 Since $M_{z_j}^*(a_\alpha^2z^\alpha)$ equals $a_{\alpha-e_j}^2z^{\alpha-e_j}$ when $\alpha\geq e_j$ and is zero otherwise,
	we obtain
	\begin{align*}
		M_{z_j}^*\Theta_\xi\eta
		&=
		\sum_{\alpha\geq e_j}
		\langle\eta,\T^\alpha\xi\rangle
		a_{\alpha-e_j}^2z^{\alpha-e_j}\\
		&=
		\sum_{\beta\in\mathbb Z_+^n}
		\langle\eta,\T^{\beta+e_j}\xi\rangle
		a_\beta^2z^\beta\\
		&=
		\sum_{\beta\in\mathbb Z_+^n}
		\langle T_j^*\eta,\T^\beta\xi\rangle
		a_\beta^2z^\beta\\
		&=
		\Theta_\xi T_j^*\eta.
	\end{align*}
	This proves \eqref{eq:Theta_zj}, and \eqref{eq:Theta_poly} follows by
	linearity and iteration.
	Since $\operatorname{Ran}\Theta_\xi$ is closed,
	\eqref{eq:Theta_zj} shows that it is a quotient module of
	$\mathcal H_K$.
	Taking adjoints in \eqref{eq:Theta_poly}, we obtain
	\[
	\Theta_\xi^*M_p
	=
	p(\T)\Theta_\xi^*,
	\qquad
	p\in\mathbb C[z_1,\dots,z_n].
	\]
	Thus $\Theta_\xi^*:\mathcal H_K\rightarrow\mathcal K$ is a module map. Since $\Theta^*_\xi$ is surjective,
	\[
	\Theta_\xi^*\big|_{\operatorname{Ran}\Theta_\xi}
	:
	\operatorname{Ran}\Theta_\xi\rightarrow\mathcal K
	\]
	is a module isomorphism.
	It remains to verify $\Theta_\xi^*1=\xi.$
	Indeed, using $a_0=1$ and the
	orthogonality of the monomials, we have for every $\eta\in\mathcal K$,
	$$
	\langle\Theta_\xi^*1,\eta\rangle
	=
	\langle1,\Theta_\xi\eta\rangle\\
	=
	\left\langle
	1,
	\sum_{\alpha\in\mathbb Z_+^n}
	\langle\eta,\T^\alpha\xi\rangle
	a_\alpha^2z^\alpha
	\right\rangle\\
	=
	\langle\xi,\eta\rangle.
	$$
	It follows that $\Theta_\xi^*1=\xi$.
	
	Finally,  if $\xi$ is a weighted Parseval frame vector, then $\Theta_\xi$ is an
	isometry. Therefore, $\Theta_\xi^*\big|_{\operatorname{Ran}\Theta_\xi}$
	is unitary.
\end{proof}

The preceding theorem yields a von Neumann type inequality for the
$\operatorname{Mult}(\HK)$ functional calculus of the tuple $\T$.

\begin{corollary}
	Let $\xi$ be a weighted frame vector for the tuple $\T$. Then there
	exists a constant $\kappa_\xi\geq1$ such that
	\begin{equation}\label{eq:p(T)_kappa}
		\|h(\T)\|\leq\kappa_\xi	\|h\|_{\operatorname{Mult}(\mathcal H_K)},
		\qquad
		h\in\operatorname{Mult}(\HK).
	\end{equation}
	More precisely, if $c_\xi$ and $C_\xi$ are lower and upper frame
	bounds in \eqref{eq:Theta_bdd}, respectively, then
	$\kappa_\xi$ can be chosen so that $1\leq \kappa_\xi\leq \sqrt{C_\xi/c_\xi}$.
	In particular,  if $\xi$ is a weighted Parseval frame vector, then
	\begin{equation}\label{eq:p(T)}
		\|h(\T)\|\leq\|h\|_{\operatorname{Mult}(\mathcal H_K)},
		\qquad
		h\in\operatorname{Mult}(\HK).
	\end{equation}
\end{corollary}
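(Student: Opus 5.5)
We will use Theorem~\ref{thm:quo_mod_map} to transfer the functional calculus of $\T$ to the compressed multipliers on the quotient module $N:=\operatorname{Ran}\Theta_\xi$. Write $V=\Theta_\xi^*|_N:N\to\Kspace$. By Theorem~\ref{thm:quo_mod_map}, $V$ is a module isomorphism, so $p(\T)=VS_pV^{-1}$ for every polynomial $p$, where $S_p=P_NM_p|_N$.

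The first step is to estimate $\|V\|\,\|V^{-1}\|$ by the frame bounds. From \eqref{eq:Theta_bdd}, $\|\Theta_\xi\|\le\sqrt{C_\xi}$, so $\|V\|\le\|\Theta_\xi^*\|\le\sqrt{C_\xi}$. Next, $\Theta_\xi:\Kspace\to N$ is bijective with $\|\Theta_\xi\eta\|\ge\sqrt{c_\xi}\|\eta\|$. For $f=\Theta_\xi\eta\in N$ we have $Vf=\Theta_\xi^*\Theta_\xi\eta$, and
\[
\langle\Theta_\xi^*\Theta_\xi\eta,\eta\rangle=\|\Theta_\xi\eta\|^2\ge c_\xi\|\eta\|^2 .
\]
Hence $\|Vf\|\ge c_\xi\|\eta\|\ge c_\xi\|f\|/\sqrt{C_\xi}$, which gives $\|V^{-1}\|\le\sqrt{C_\xi}/c_\xi$. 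This crude bound only yields $\kappa\le C_\xi/c_\xi$. To obtain the sharper $\sqrt{C_\xi/c_\xi}$, we will instead use the polar form. Let $G=\Theta_\xi^*\Theta_\xi$ be the frame operator, which satisfies $c_\xi I\le G\le C_\xi I$. Then $W=\Theta_\xi G^{-1/2}:\Kspace\to N$ is unitary. For $f=\Theta_\xi\eta$ we have $Vf=G\eta=G^{1/2}W^{-1}f$, so $V=G^{1/2}W^*$. Consequently $\|V\|\le\sqrt{C_\xi}$ and $\|V^{-1}\|=\|WG^{-1/2}\|\le 1/\sqrt{c_\xi}$, giving
\[
\|V\|\,\|V^{-1}\|\le\sqrt{C_\xi/c_\xi}.
\]

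The second step is to define $h(\T)$ for $h\in\operatorname{Mult}(\HK)$ and bound it. Since $N^\perp$ is $M_h$-invariant (as in \eqref{eq:BMh}, obtained by strong density of polynomials), we may set $S_h=P_NM_h|_N$. Then $h\mapsto S_h$ is multiplicative, because $N$ is co-invariant, and $\|S_h\|\le\|M_h\|=\|h\|_{\operatorname{Mult}(\HK)}$. We define, or identify, $h(\T)=VS_hV^{-1}$. This is consistent with polynomials, and it is the natural extension since $p_k\to h$ strongly implies $p_k(\T)\to VS_hV^{-1}$ strongly. Therefore
\[
\|h(\T)\|\le\|V\|\,\|V^{-1}\|\,\|h\|_{\operatorname{Mult}}\le\sqrt{C_\xi/c_\xi}\,\|h\|_{\operatorname{Mult}}.
\]
We take $\kappa_\xi=\max\{1,\ \|V\|\|V^{-1}\|\}$; since $c_\xi\le C_\xi$, this gives $1\le\kappa_\xi\le\sqrt{C_\xi/c_\xi}$.

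In the Parseval case $c_\xi=C_\xi=1$, and the bound reduces to \eqref{eq:p(T)}; alternatively, $V$ is unitary by Theorem~\ref{thm:quo_mod_map}. The main point needing care is the meaning of $h(\T)$ for non-polynomial multipliers, which we will settle through the strong-operator density of polynomials in $\operatorname{Mult}(\HK)$. The sharp constant comes from the polar decomposition step.
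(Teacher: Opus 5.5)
Your proposal is correct and follows the paper's proof. Like the paper, you write $p(\T)=XS_pX^{-1}$ with $X=\Theta_\xi^*|_{\operatorname{Ran}\Theta_\xi}$, bound $\|X\|\le\sqrt{C_\xi}$ and $\|X^{-1}\|\le 1/\sqrt{c_\xi}$, and use strong density of polynomials to pass to multipliers. Your polar-decomposition step $V=G^{1/2}W^*$ supplies the justification for $\|X^{-1}\|\le 1/\sqrt{c_\xi}$, which the paper only asserts, and defining $h(\T)=VS_hV^{-1}$ gives the multiplier bound directly from $\|S_h\|\le\|M_h\|$; your $\max\{1,\cdot\}$ is redundant, since $\|V\|\,\|V^{-1}\|\ge 1$ automatically.
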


\begin{proof}
	We set $X= \Theta_\xi^*\big|_{\operatorname{Ran}\Theta_\xi}$.
	By Theorem~\ref{thm:quo_mod_map}, the map $X:\operatorname{Ran}\Theta_\xi\to\mathcal K$ is a module
	isomorphism. Then
	\[
	XS_p=p(\T)X,\qquad p\in\mathbb{C}[z_1,\dots,z_n],
	\]
	and hence $p(\T)=XS_pX^{-1}$.
	It follows that
	$$
	\|p(\T)\|\leq\|X\|\,\|X^{-1}\|\,\|S_p\|
	\leq	\|X\|\,\|X^{-1}\|	\|p\|_{\operatorname{Mult}(\mathcal H_K)}.
	$$
	Since polynomials are dense in $\operatorname{Mult}(\HK)$ in the
	strong operator topology, \eqref{eq:p(T)_kappa} holds with the constant
	\begin{equation}\label{eq:kappa_xi}
		\kappa_\xi=\|X\|\,\|X^{-1}\|\geq 1.
	\end{equation}
	Moreover, the frame inequalities imply that
	\[
	c_\xi I
	\leq
	\Theta_\xi^*\Theta_\xi
	\leq
	C_\xi I.
	\]
	We obtain $\|X\|\leq\sqrt{C_\xi}$ and $\|X^{-1}\|\leq 1/\sqrt{c_\xi}$.
	Combining these estimates with \eqref{eq:kappa_xi} yields that $\kappa_\xi\leq
	\sqrt{C_\xi/ c_\xi} $.
	
If $\xi$ is a weighted Parseval frame vector, then the corresponding constants
$c_\xi$ and $C_\xi$ may be taken to be $1$, and hence \eqref{eq:p(T)} follows.
\end{proof}

We can now characterize weighted frame vectors.
\begin{theorem}\label{thm:xi_frame}
	Let $\xi\in\Kspace$.
	Then $\xi$ is a weighted frame vector for the tuple $\T$ if and only if there exists a
	surjective module map $A:\mathcal H_K\to\mathcal K$
	such that $A1=\xi$.
	Such a map $A$ is necessarily unique.
	Moreover, $\xi$ is a weighted Parseval frame vector for the tuple $\T$ if and only if $A$ is a
	coisometry, that is, $AA^*=I$.
\end{theorem}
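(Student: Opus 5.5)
The forward direction is already contained in Theorem~\ref{thm:quo_mod_map}. If $\xi$ is a weighted frame vector, then $A=\Theta_\xi^*$ is a surjective module map with $A1=\xi$. If moreover the frame is Parseval, then $\Theta_\xi$ is an isometry, so $AA^*=\Theta_\xi^*\Theta_\xi=I$. Uniqueness needs only the module property: a module map $A$ with $A1=\xi$ satisfies $Ap=A M_p1=p(\T)A1=p(\T)\xi$ for every polynomial $p$. Since polynomials are dense in $\HK$ and $A$ is bounded, $A$ is determined by $\xi$.

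For the converse, let $A:\HK\to\Kspace$ be a surjective module map with $A1=\xi$. By the computation above, $A(a_\alpha z^\alpha)=a_\alpha\T^\alpha\xi$ for every $\alpha$. Since $\{a_\alpha z^\alpha\}$ is an orthonormal basis of $\HK$, for every $\eta\in\Kspace$ we get
\[
\sum_{\alpha\in\Zp^n}|\langle\eta,a_\alpha\T^\alpha\xi\rangle|^2
=\sum_{\alpha\in\Zp^n}|\langle A^*\eta,a_\alpha z^\alpha\rangle|^2=\|A^*\eta\|^2 .
\]
The upper bound $\|A^*\eta\|^2\le\|A\|^2\|\eta\|^2$ is immediate. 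For the lower bound, $A$ is surjective, so by the open mapping theorem (equivalently, the closed range theorem) $A^*$ is injective with closed range and hence bounded below: $\|A^*\eta\|\ge c\|\eta\|$ for some $c>0$. Thus $\{a_\alpha\T^\alpha\xi\}$ is a frame. If $AA^*=I$, then $\|A^*\eta\|=\|\eta\|$ and the frame is Parseval. Conversely, if the frame is Parseval, then $\|A^*\eta\|=\|\eta\|$ for all $\eta$, so $A^*$ is an isometry and $AA^*=I$.

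The analysis here also shows that $A^*=\Theta_\xi$, which matches uniqueness. The only nontrivial point is the lower frame bound, which rests on the standard fact that a surjection between Hilbert spaces has an adjoint that is bounded below. I do not expect a real obstacle; the one thing to be careful about is the identity $A(a_\alpha z^\alpha)=a_\alpha\T^\alpha\xi$, which uses $M_{z^\alpha}1=z^\alpha$ and the intertwining relation $AM_{z_j}=T_jA$.
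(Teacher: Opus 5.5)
Your proposal is correct and follows the paper's proof essentially step for step: the forward direction uses Theorem~\ref{thm:quo_mod_map} with $A=\Theta_\xi^*$, and the converse uses the identity $\sum_{\alpha}|\langle\eta,a_\alpha\T^\alpha\xi\rangle|^2=\|A^*\eta\|^2$ together with $A^*$ being bounded below, which also settles the Parseval/coisometry equivalence. The one difference is that you spell out the uniqueness argument (via $Ap=p(\T)\xi$ and density of polynomials), which the paper only calls ``not difficult to see''.
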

\begin{proof}
	It is not difficult to see that if such a module map $A$ exists, then it is unique.
	Suppose first that $\xi$ is a weighted frame vector. By
	Theorem~\ref{thm:quo_mod_map}, $\Theta_\xi^*:\mathcal H_K\rightarrow\mathcal K$
	is a surjective module map satisfying $\Theta_\xi^*1=\xi$. Thus one
	may take $A=\Theta_\xi^*$. If the frame is Parseval, then $A^*=\Theta_\xi$
	is an isometry and hence $AA^*=I$.
	
	Conversely, suppose that there exists a surjective module map $	A:\mathcal H_K\rightarrow\mathcal K$ with
	$A1=\xi$. Note that $\{a_\alpha z^\alpha\}_{\alpha\in\mathbb{Z}^n_+}$
	forms an orthonormal basis for $\mathcal H_K$ and
	$$
	a_\alpha Az^\alpha=a_\alpha AM_{z^\alpha}1=a_\alpha \T^\alpha \xi,\qquad \alpha\in\mathbb{Z}^n_+.
	$$
	Then for every $\eta\in\mathcal{K}$, we have
	\begin{align*}
		\sum_{\alpha\in\mathbb Z_+^n}
		\left|
		\langle\eta,a_\alpha \T^\alpha\xi\rangle
		\right|^2
		&=
		\sum_{\alpha\in\mathbb Z_+^n}
		\left|
		\langle\eta, a_\alpha Az^\alpha\rangle
		\right|^2\\
		&=
		\sum_{\alpha\in\mathbb Z_+^n}
		\left|
		\langle A^*\eta, a_\alpha z^\alpha\rangle
		\right|^2\\
		&=
		\|A^*\eta\|^2.
	\end{align*}
	Since $A$ is surjective, $A^*$ is bounded below. Hence there exists
	$c>0$ such that
	$$
	c\|\eta\|	\leq	\|A^*\eta\|
	\leq	\|A\|\|\eta\|,
	\qquad \eta\in\mathcal K.
	$$
	Therefore,
	$$
	c^2\|\eta\|^2	\leq
	\sum_{\alpha\in\mathbb Z_+^n}
	\left|	\langle\eta,a_\alpha \T^\alpha\xi\rangle	\right|^2
	\leq	\|A\|^2\|\eta\|^2,\qquad \eta\in\mathcal{K}.
	$$
	It follows that $\{a_\alpha \T^\alpha\xi\}_{\alpha\in\mathbb Z_+^n}$
	is a frame for $\mathcal K$, and hence $\xi$ is a weighted frame vector.

	Finally, the identity
	$$
	\sum_{\alpha\in\mathbb Z_+^n}
	\left|
	\langle\eta,a_\alpha \T^\alpha\xi\rangle
	\right|^2
	=
	\|A^*\eta\|^2
	$$
	shows that $\xi$ is a weighted Parseval frame vector if and only if $AA^*=I$.
\end{proof}

Theorem~\ref{thm:xi_frame} yields the following existence criteria for weighted frame vectors.
\begin{corollary}\label{cor:xi}
There exists a weighted frame vector for the tuple $\T$ if and only if $\mathcal K$ is
	similar to a quotient module of $\mathcal H_K$.
	Moreover, there exists a weighted Parseval frame vector for the tuple $\T$  if and only if
	$\mathcal K$ is unitarily equivalent to a quotient module of
	$\mathcal H_K$.
\end{corollary}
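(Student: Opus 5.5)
The plan is to derive both parts of Corollary~\ref{cor:xi} directly from Theorem~\ref{thm:xi_frame} and Theorem~\ref{thm:quo_mod_map}. Each part has a ``only if'' direction and an ``if'' direction.

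\emph{Frame vectors, ``only if''.} Suppose $\xi$ is a weighted frame vector. By Theorem~\ref{thm:quo_mod_map}, $\operatorname{Ran}\Theta_\xi$ is a quotient module of $\HK$. The same theorem says $\Theta_\xi^*|_{\operatorname{Ran}\Theta_\xi}$ is a module isomorphism onto $\Kspace$. Hence $\Kspace$ is similar to a quotient module.

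\emph{Frame vectors, ``if''.} Let $N=M^\perp$ be a quotient module and $X:N\to\Kspace$ a module isomorphism. Set $A=XP_N:\HK\to\Kspace$, which is surjective because $X$ is. To see that $A$ is a module map, use that $M$ is invariant under each $M_{z_j}$. This gives $P_NM_{z_j}=P_NM_{z_j}P_N=S_jP_N$, since $P_NM_{z_j}P_M=0$. Therefore
\[
AM_{z_j}=XS_jP_N=T_jXP_N=T_jA.
\]
Theorem~\ref{thm:xi_frame} then shows that $\xi:=A1=XP_N1$ is a weighted frame vector.

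\emph{Parseval frame vectors.} If $X$ is unitary, then
\[
AA^*=XP_NP_NX^*=XX^*=I,
\]
so $A$ is a coisometry. By Theorem~\ref{thm:xi_frame}, $A1$ is then a weighted Parseval frame vector. Conversely, if $\xi$ is a weighted Parseval frame vector, the last assertion of Theorem~\ref{thm:quo_mod_map} says $\Theta_\xi^*|_{\operatorname{Ran}\Theta_\xi}$ is unitary. This gives the unitary equivalence.

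The only step needing any care is the identity $P_NM_{z_j}=S_jP_N$, i.e.\ that compressing to $N$ intertwines the module actions. It follows immediately from invariance of $M$, so no real obstacle arises.
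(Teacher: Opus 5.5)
Your proof is correct and follows the paper's argument. For sufficiency you set $A=XP_N$, apply Theorem~\ref{thm:xi_frame}, and use $AA^*=XX^*=I$ in the unitary case; for necessity you use the module isomorphism $\Theta_\xi^*|_{\operatorname{Ran}\Theta_\xi}$ from Theorem~\ref{thm:quo_mod_map}, and you also verify explicitly that $XP_N$ is a module map, a step the paper only asserts.
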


\begin{proof}
	Necessity in both assertions follows immediately from
	Theorem~\ref{thm:xi_frame}, so it remains to prove sufficiency.
	
	Suppose that $B:N\rightarrow\mathcal K$
	is a module isomorphism, where $N=M^\perp$ is a quotient module of
	$\mathcal H_K$. Then $A:=BP_N:\mathcal H_K\rightarrow\mathcal K$
	is a surjective module map. By Theorem~\ref{thm:xi_frame}, $A1$ is a weighted frame vector. Moreover, if $B$ is unitary, then $AA^*=I$, and hence $A1$ is a weighted Parseval frame vector.
\end{proof}

Recall that for a weighted frame vector $\xi$, the analysis operator
$\Theta_\xi$ maps $\mathcal K$ into $\mathcal H_K$.
The following proposition shows that the similarity of weighted frame
vectors is determined by the ranges of their analysis operators.
\begin{proposition}\label{prop:equi_frame}
	Let $\xi$ and $\eta$ be two weighted frame vectors for the tuple $\T$.
	Then $\xi$ and $\eta$ are similar if and only if $\operatorname{Ran}\Theta_\xi
	=
	\operatorname{Ran}\Theta_{\eta}$.
\end{proposition}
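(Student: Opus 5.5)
The plan is to exploit the fact that, by Theorem~\ref{thm:quo_mod_map}, each weighted frame vector comes with a canonical module isomorphism $X_\xi=\Theta_\xi^*|_{\operatorname{Ran}\Theta_\xi}:\operatorname{Ran}\Theta_\xi\to\Kspace$ with $\Theta_\xi^*1=\xi$. Both implications will come from comparing the analysis operators directly.

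\emph{Necessity.} Suppose $A$ is a module automorphism of $\Kspace$ with $A\xi=\eta$. I will first show that
\[
\Theta_\eta=\Theta_\xi A^*.
\]
Since $A$ commutes with each $T_j$, we have $\T^\alpha\eta=\T^\alpha A\xi=A\T^\alpha\xi$. Hence, for every $\zeta\in\Kspace$,
\[
\langle\zeta,a_\alpha\T^\alpha\eta\rangle=\langle A^*\zeta,a_\alpha\T^\alpha\xi\rangle .
\]
Summing against the orthonormal basis $\{a_\alpha z^\alpha\}$ gives the identity coefficientwise. Because $A^*$ is invertible, it maps $\Kspace$ onto $\Kspace$, so $\operatorname{Ran}\Theta_\eta=\Theta_\xi(A^*\Kspace)=\operatorname{Ran}\Theta_\xi$.

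\emph{Sufficiency.} Suppose $N:=\operatorname{Ran}\Theta_\xi=\operatorname{Ran}\Theta_\eta$, which is a quotient module of $\HK$ by Theorem~\ref{thm:quo_mod_map}. Then $X_\xi,X_\eta:N\to\Kspace$ are both module isomorphisms, and I set
\[
A=X_\eta X_\xi^{-1}.
\]
This is a composition of module isomorphisms, hence a module automorphism of $\Kspace$. It remains to check that $A\xi=\eta$. The natural candidate preimage is $P_N1$, so I want $X_\xi P_N1=\xi$ and $X_\eta P_N1=\eta$. Since $\Theta_\xi^*$ vanishes on $(\operatorname{Ran}\Theta_\xi)^\perp=\ker\Theta_\xi^*$, we get
\[
\Theta_\xi^*1=\Theta_\xi^*P_N1=X_\xi(P_N1).
\]
Thus $X_\xi P_N1=\xi$, and likewise $X_\eta P_N1=\eta$, because both analysis operators have the same range $N$. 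It follows that $A\xi=X_\eta P_N1=\eta$.

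The only subtle point is this identification of the preimage of $\xi$ under $X_\xi$ as $P_N1$, which uses $\ker\Theta_\xi^*=N^\perp$. That this orthogonal complement is the same for both vectors is exactly where the hypothesis that the ranges coincide is used. Everything else is formal.
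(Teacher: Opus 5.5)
Your proposal is correct and follows essentially the same route as the paper. Sufficiency is identical: the automorphism is $\Theta_\eta^*|_N\circ(\Theta_\xi^*|_N)^{-1}$, with $P_N1$ as the common preimage. For necessity, you prove $\Theta_\eta=\Theta_\xi A^*$ coefficientwise, which is just the adjoint of the paper's identity $A\Theta_\xi^*=\Theta_\eta^*$ (obtained there from density of polynomials). You then compare the ranges directly rather than the kernels of the synthesis operators, which avoids having to use that the ranges are closed.
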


\begin{proof}
	Suppose first that $\xi$ and $\eta$ are similar, and let $A$
	be a module automorphism on $\Kspace$ satisfying
	$A\xi=\eta$. For every $\alpha\in\mathbb Z_+^n$, we have
	\[
	A\Theta_\xi^*z^\alpha
	=
	A\T^\alpha\xi
	=
	\T^\alpha\eta
	=
	\Theta_{\eta}^*z^\alpha.
	\]
	Since the polynomials are dense in $\mathcal H_K$,  $A\Theta_\xi^*=\Theta_\eta^*$. Thus $\ker\Theta_\xi^*	=	\ker\Theta_\eta^*$,
	and hence $\operatorname{Ran}\Theta_\xi=
	\operatorname{Ran}\Theta_\eta$.
	
	Conversely, suppose that $	N:=\operatorname{Ran}\Theta_\xi	=	\operatorname{Ran}\Theta_\eta.$
	Then
	\begin{equation}\label{eq:equi_frame_A}
		A:=
		\Theta_\eta^*\big|_N
		\circ
		\left(\Theta_\xi^*\big|_N\right)^{-1}
		:
		\mathcal K\rightarrow\Kspace
	\end{equation}
	is a module automorphism. Moreover,
	\[
	A\xi
	=
	\Theta_\eta^*P_N1
	=
	\Theta_\eta^*1
	=\eta.
	\]
	Therefore, $\xi$ and $\eta$ are similar.
\end{proof}

We now classify weighted frame vectors.
The weights $\{a_\alpha\}_{\alpha\in\mathbb Z_+^n}$ are called
\emph{CNP weights} if the associated kernel $K$ is a CNP kernel.
\begin{theorem}\label{thm:cla}
	The following statements hold.
	\begin{itemize}
		\item[(i)] If $\xi$ and $\eta$ are two weighted Parseval frame
		vectors for the tuple $\T$, then there exists a unimodular
		constant $c$ such that $\eta=c\xi$.
		
		\item[(ii)] If $\{a_\alpha\}_{\alpha\in\mathbb Z_+^n}$ are CNP
		weights and $\xi,~\eta$ are two weighted frame vectors for
		the tuple $\T$, then $\xi$ and $\eta$ are similar.
	\end{itemize}
\end{theorem}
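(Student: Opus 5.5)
For both parts the plan is to compare the two quotient modules $\operatorname{Ran}\Theta_\xi$ and $\operatorname{Ran}\Theta_\eta$ of $\HK$. By Theorem~\ref{thm:quo_mod_map} both are quotient modules, and $\Theta_\xi^*|_{\operatorname{Ran}\Theta_\xi}$, $\Theta_\eta^*|_{\operatorname{Ran}\Theta_\eta}$ are module isomorphisms onto $\Kspace$. Hence
\[
V:=\bigl(\Theta_\eta^*|_{\operatorname{Ran}\Theta_\eta}\bigr)^{-1}\circ\Theta_\xi^*|_{\operatorname{Ran}\Theta_\xi}:\operatorname{Ran}\Theta_\xi\to\operatorname{Ran}\Theta_\eta
\]
is a module isomorphism between the two quotient modules, and it is unitary in the Parseval case.

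For (ii), $K$ is CNP, so Corollary~\ref{cor:rig} applied to the submodules $(\operatorname{Ran}\Theta_\xi)^\perp$ and $(\operatorname{Ran}\Theta_\eta)^\perp$ gives $\operatorname{Ran}\Theta_\xi=\operatorname{Ran}\Theta_\eta$. Proposition~\ref{prop:equi_frame} then says that $\xi$ and $\eta$ are similar.

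For (i), $V$ is unitary, so Proposition~\ref{prop:rig} first gives $N:=\operatorname{Ran}\Theta_\xi=\operatorname{Ran}\Theta_\eta$. Proposition~\ref{prop:equi_frame} supplies the automorphism $A=\Theta_\eta^*|_N\circ(\Theta_\xi^*|_N)^{-1}$ of $\Kspace$ with $A\xi=\eta$. To make $A$ a scalar, I would transport it back to $N$ and set
\[
U:=(\Theta_\xi^*|_N)^{-1}A\,\Theta_\xi^*|_N=(\Theta_\xi^*|_N)^{-1}\Theta_\eta^*|_N .
\]
This is a module automorphism of $N$. It is unitary because $\Theta_\xi^*|_N$ and $\Theta_\eta^*|_N$ are unitaries $N\to\Kspace$ in the Parseval case (Theorem~\ref{thm:quo_mod_map}). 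The ``moreover'' part of Proposition~\ref{prop:rig} then gives $U=cI_N$ with $|c|=1$. Hence $\Theta_\eta^*|_N=c\,\Theta_\xi^*|_N$, so $A=cI$.

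Finally, $\eta=A\xi=c\xi$. Equivalently, one can write $\eta=\Theta_\eta^*1=\Theta_\eta^*P_N1=c\,\Theta_\xi^*P_N1=c\xi$, using that $\Theta_\xi^*$ and $\Theta_\eta^*$ vanish on $N^\perp=\ker\Theta^*$.

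No step is a serious obstacle, since the argument only assembles earlier results. The point to check carefully is that the conjugated map $U$ really is a module map of $N$ and is unitary. Both follow because it is a composition of module isomorphisms that are each unitary in the Parseval case.
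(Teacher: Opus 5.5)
Your proposal is correct and matches the paper's proof: for (ii) you combine Corollary~\ref{cor:rig} with Proposition~\ref{prop:equi_frame}, and for (i) you use Proposition~\ref{prop:rig} to identify the ranges, then apply its ``moreover'' clause to the unitary $(\Theta_\xi^*|_N)^{-1}\Theta_\eta^*|_N$ to get $\eta=\Theta_\eta^*P_N1=c\,\Theta_\xi^*P_N1=c\xi$. The only difference is cosmetic: the detour through the automorphism $A$ of Proposition~\ref{prop:equi_frame} is unnecessary, since your $U$ is exactly the paper's $V=U_\xi^{-1}U_\eta$.
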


\begin{proof}
	We first prove {\rm (i)}.
	By Theorem~\ref{thm:quo_mod_map}, the operators
	$$
	U_\xi	=	\Theta_{\xi}^*	\big|_{\operatorname{Ran}\Theta_{\xi}}
	\quad\text{and}\quad
	U_\eta	=	\Theta_\eta^*	\big|_{\operatorname{Ran}\Theta_\eta}
	$$
	are unitary module isomorphisms onto $\mathcal K$. Hence $V	:=	U_\xi^{-1}U_\eta	:
	\operatorname{Ran}\Theta_\eta
	\to
	\operatorname{Ran}\Theta_{\xi}$
	is a unitary module isomorphism. By Proposition~\ref{prop:rig},
	\[
	\operatorname{Ran}\Theta_\eta
	=
	\operatorname{Ran}\Theta_\xi
	=:
	N.
	\]
	Moreover, $V=cI_N$ for some unimodular constant $c$. Therefore,
	$$
	\eta=U_\eta(P_N1)=U_\xi V(P_N1)=cU_\xi(P_N1)=c\xi.
	$$
	
	For {\rm (ii)},
	by Theorem~\ref{thm:quo_mod_map}, $\Kspace$ is similar to both
	$\operatorname{Ran}\Theta_\xi$ and $\operatorname{Ran}\Theta_\eta$.
	It follows that $\operatorname{Ran}\Theta_\xi$ and
	$\operatorname{Ran}\Theta_\eta$ are similar quotient modules of $\HK$.
	Applying Corollary~\ref{cor:rig} and the CNP assumption, we have
	$
	\operatorname{Ran}\Theta_\xi
	=
	\operatorname{Ran}\Theta_\eta.
	$
	It then follows from Proposition~\ref{prop:equi_frame} that
	$\xi$ and $\eta$ are similar.
\end{proof}

In contrast to the CNP setting of Theorem~\ref{thm:cla}{\rm (ii)}, we
conclude the paper by considering two classical non-CNP kernels: the
Bergman kernel on the unit disc $\D$ and the Hardy kernel on the polydisc
$\D^n$ with $n>1$. The latter corresponds to the case
$a_\alpha=1$ and shows that there exists a commuting tuple admitting
two distinct frame vectors that are not similar.

Recall that the Bergman space $L^2_a(\D)$ corresponds to the weight $a_k=\sqrt{k+1}$ for each $k\geq 0$.
\begin{example}
	Let $L$ be the submodule of $L^2_a(\D)$ given in Example~\ref{ex:Bergman}, and let $N$ be a submodule such that $L\neq N$ but $L^\perp$ and $N^\perp$ are similar. Suppose that $A: N^\perp\to L^\perp$ is a module isomorphism. We set $\xi=P_{L^\perp}1$ and $\eta=AP_{N^\perp}1$. Clearly, both  $\xi$ and $\eta$ are weighted frame vectors for the compressed coordinate multiplier $S_z: L^\perp\to L^\perp$.
	
	For each $\varphi\in L^\perp$, since $	\langle\varphi,S_z^k\xi\rangle
	=\langle\varphi,z^k\rangle$,
	we have
	$$
	\Theta_\xi(\varphi)=\sum_{k=0}^\infty\left\langle\varphi,\sqrt{k+1}\,S_z^k\xi	\right\rangle\sqrt{k+1}\,z^k=\varphi,
	$$
	and hence $\Ran\Theta_\xi=L^\perp$. On the other hand,
	\begin{align*}
	\Theta_\eta(\varphi)
	&=
	\sum_{k=0}^\infty
	\left\langle
	\varphi,\sqrt{k+1}\,S_z^k\eta
	\right\rangle
	\sqrt{k+1}\,z^k \\
	&=
	\sum_{k=0}^\infty
	\left\langle
	A^*\varphi,
	\sqrt{k+1}\,S_z^kP_{N^\perp}1
	\right\rangle
	\sqrt{k+1}\,z^k \\
	&=A^*\varphi.
\end{align*} 
Thus $\Ran \Theta_\eta=N^\perp$. Since $L\neq N$, we have $\Ran\Theta_\xi\neq \Ran\Theta_\eta$. Then by Proposition~\ref{prop:equi_frame}, $\xi$ and $\eta$ are not similar.
\end{example}

The Hardy space $H^2(\D^n)$ corresponds to the case $a_\alpha=1$ for all $\alpha\in\Zp^n$.
\begin{theorem}\label{negaans}
	For every $n>1$, there exist a separable Hilbert space $\Kspace$,
	a commuting $n$-tuple $\T=(T_1,\dots,T_n)$ on $\Kspace$, and two frame vectors
	$\xi,\eta\in\Kspace$ for the tuple $\T$ that are not similar.
\end{theorem}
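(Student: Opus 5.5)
The plan is to copy the Bergman example, now in the Hardy space $H^2(\D^n)$, where $a_\alpha=1$ for all $\alpha$. Conditions (A1)--(A3) hold here: $1/K$ is a polynomial, so only finitely many $b_\alpha$ are nonzero. By Theorem~\ref{thm:Hardy}, there are submodules $M\neq N$ of $H^2(\D^n)$ such that $N^\perp$ and $M^\perp$ are similar. Fix a module isomorphism $A:N^\perp\to M^\perp$. Take $\Kspace=M^\perp$, a closed subspace of a separable space and hence separable. Take $\T=\mathbf S=(S_1,\dots,S_n)$, the compressed coordinate multipliers on $M^\perp$. Set
\[
\xi=P_{M^\perp}1,\qquad \eta=AP_{N^\perp}1 .
\]

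First, I check that both vectors are frame vectors. The map $P_{M^\perp}:H^2(\D^n)\to M^\perp$ is a surjective module map, because $M$ is invariant and so $P_{M^\perp}M_p=S_pP_{M^\perp}$. It also sends $1$ to $\xi$, so Theorem~\ref{thm:xi_frame} shows that $\xi$ is a frame vector. Likewise, $AP_{N^\perp}$ is a surjective module map onto $M^\perp$, being the composition of a surjective module map onto $N^\perp$ with a module isomorphism. It sends $1$ to $\eta$, so $\eta$ is also a frame vector. This is exactly the construction in the proof of Corollary~\ref{cor:xi}.

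Next, I identify the ranges of the analysis operators. The uniqueness part of Theorem~\ref{thm:xi_frame}, together with Theorem~\ref{thm:quo_mod_map}, gives $\Theta_\xi^*=P_{M^\perp}$ and $\Theta_\eta^*=AP_{N^\perp}$. Hence
\[
\ker\Theta_\xi^*=M,\qquad \ker\Theta_\eta^*=\ker P_{N^\perp}=N ,
\]
where the second equality uses that $A$ is injective. Taking orthogonal complements, $\Ran\Theta_\xi=M^\perp$ and $\Ran\Theta_\eta=N^\perp$. Both ranges are closed by \eqref{eq:Theta_bdd}.

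Alternatively, one can compute directly as in the Bergman example: $\langle\varphi,\mathbf S^\alpha\xi\rangle=\langle\varphi,z^\alpha\rangle$ gives $\Theta_\xi\varphi=\varphi$, and $\Theta_\eta\varphi=A^*\varphi$.

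Finally, since $M\neq N$, the two ranges differ, and Proposition~\ref{prop:equi_frame} shows that $\xi$ and $\eta$ are not similar. They are in particular distinct, since a vector is similar to itself via the identity.

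The only real input is the existence of $M\neq N$ with similar quotients, and that is already supplied by Theorem~\ref{thm:Hardy}. The step I expect to need the most care is the identification $\ker\Theta_\eta^*=N$, which rests on the uniqueness of the module map in Theorem~\ref{thm:xi_frame}. The uniqueness itself is easy: a module map sending $1$ to $\eta$ is determined on polynomials, and polynomials are dense.
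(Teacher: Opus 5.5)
Your proposal is correct and follows the paper's proof essentially verbatim. You use the same choices $\Kspace=M^\perp$, $\T=\mathbf S$, $\xi=P_{M^\perp}1$, $\eta=AP_{N^\perp}1$ and the same final appeal to Proposition~\ref{prop:equi_frame}; your identification of $\Ran\Theta_\xi=M^\perp$ and $\Ran\Theta_\eta=N^\perp$, via uniqueness of the module map and computing $\ker\Theta^*$, cleanly fills in the step the paper leaves as ``analogous to the previous example'' (the direct computation $\Theta_\eta\varphi=A^*\varphi$ there).
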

\begin{proof}
	By Theorem~\ref{thm:Hardy}, there exist two distinct but similar
	quotient modules $M^\perp$ and $N^\perp$ of $H^2(\D^n)$.
	Let $A:N^\perp\to M^\perp$ be a module isomorphism, and set $\xi=P_{M^\perp}1$ and $\eta=AP_{N^\perp}1$.
	Then both $\xi$ and $\eta$ are frame vectors for the compressed
	coordinate multipliers $(S_1,\dots,S_n)$ on $M^\perp$.
	By an argument analogous to that in the previous example, we have
	\[
	\operatorname{Ran}\Theta_\xi=M^\perp
	\qquad\text{and}\qquad
	\operatorname{Ran}\Theta_\eta=N^\perp.
	\]
	Since $M^\perp\neq N^\perp$, it follows from
	Proposition~\ref{prop:equi_frame} that $\xi$ and $\eta$ are not
	similar. Thus the conclusion follows by taking $\Kspace=M^\perp$ and $\T=(S_1,\dots,S_n)$.
\end{proof}
We note that this gives a negative answer to the open question mentioned in \cite{BHKLL26}: ``the question of whether the frames generated by every frame representation of any commutative semigroup are equivalent remains open."
More precisely, for each $n>1$, let $\Kspace$, $\T$, $\xi$, and $\eta$ be as in Theorem~\ref{negaans}, and define the semigroup representation
$$
\pi:\Zp^n\to B(\Kspace),\qquad \pi(\alpha)=\T^\alpha.
$$
Then $(\pi,\xi)$ and $(\pi,\eta)$ are frame representations that are not equivalent in the sense of \cite{BHKLL26}.

\vspace{1em}
\smallskip\noindent AI disclosure.
The authors got inspiration from ChatGPT (OpenAI) in developing the proof of Theorem~\ref{thm:Hardy}.

%-------------------------------------reference-----------------------------------------

\end{document}